\definecolor{bleudefrance}{RGB}{205,205,250}
\definecolor{blizzardblue}{RGB}{235,235,250}
\newtheorem*{maintheorem*}{Main Theorem}
\newtheorem{theorem}{Theorem}[section]
\newtheorem{prop}[theorem]{Proposition}
\newtheorem{conj}[theorem]{Conjecture}
\newtheorem{lemma}[theorem]{Lemma}
\newtheorem{cor}[theorem]{Corollary}
\theoremstyle{definition}
\newtheorem{definition}[theorem]{Definition}
\newtheorem{remark}[theorem]{Remark}
\newtheorem{example}[theorem]{Example}
\numberwithin{equation}{section}
\newcommand{\nn}{\mathbb{N}}
\newcommand{\qq}{\mathbb{Q}}
\newcommand{\rr}{\mathbb{R}}
\newcommand{\zz}{\mathbb{Z}}
\providecommand\llb{\llbracket}
\providecommand\rrb{\rrbracket}
\keywords{Puiseux monoids, factorization theory, factorization invariants, system of sets of lengths, realization theorem, set of distances, catenary degree, tame degree}
\subjclass[2010]{Primary: 20M13; Secondary: 06F05, 20M14, 16Y60}
\begin{document}
	
	\mbox{}
	\title{Factorization invariants of Puiseux monoids \\ generated by geometric sequences}
	
	\author{Scott T. Chapman}
	\address{Department of Mathematics and Statistics\\Sam Houston State University\\Huntsville, TX 77341}
	\email{scott.chapman@shsu.edu}
	
	
	\author{Felix Gotti}
	\address{Department of Mathematics\\UC Berkeley\\Berkeley, CA 94720 \newline \indent Department of Mathematics\\Harvard University\\Cambridge, MA 02138}
	\email{felixgotti@berkeley.edu}
	\email{felixgotti@harvard.edu}
	
	\author{Marly Gotti}
	\address{Department of Mathematics\\University of Florida\\Gainesville, FL 32611}
	\email{marlycormar@ufl.edu}
	
	\date{\today}
	
	\begin{abstract}
	We study some of the factorization invariants of the class of Puiseux monoids generated by geometric sequences, and we compare and contrast them with the known results for numerical monoids generated by arithmetic sequences. The class we study here consists of all atomic monoids of the form $S_r := \langle r^n \mid n \in \nn_0 \rangle$, where $r$ is a positive rational. As the atomic monoids $S_r$ are nicely generated, we are able to give detailed descriptions of many of their factorization invariants. One distinguishing characteristic of $S_r$ is that all its sets of lengths are arithmetic sequences of the same distance, namely $|a-b|$, where $a,b \in \nn$ are such that $r = a/b$ and $\gcd(a,b) = 1$. We prove this, and then use it to study the elasticity and tameness of $S_r$.
	\end{abstract}
\maketitle

\tableofcontents

\section{Prologue} \label{sec:pro}

There is little argument that the study of factorization properties of rings and integral domains was a driving force in the early development of commutative algebra. Most of this work centered on determining when an algebraic structure has ``nice'' factorization properties (i.e., what today has been deemed a unique factorization domain (or UFD). It was not until the appearance of papers in the 1970s and 1980s by Skula~\cite{Sk76}, Zaks~\cite{Z80}, Narkiewicz~\cite{N71, N79}, Halter-Koch~\cite{HK84}, and Valenza\footnote{While Valenza's paper appeared in 1990, it was actually submitted 10 years earlier.}~\cite{V90} that there emerged interest in studying the deviation of an algebraic object from the UFD condition. Implicit in much of this work is the realization that problems involving factorizations of elements in a ring or integral domain are merely problems involving the multiplicative semigroup of the object in question. Hence, until the early part of the 21st century, many papers studying non-unique factorization were written from a purely multiplicative point of view (which to a large extent covered Krull domains and monoids). This changed with the appearance of~\cite{BCKR} and~\cite{CHM}, both of which view factorization problems in additive submonoids of the natural numbers known as \textit{numerical monoids}. These two papers generated a flood of work in this area, from both the pure \cite{ACHP07, CCMMP17, CDHK10, CGL09, CHK09, CKLNZ14, CK17, OP18} and the computational \cite{BOP17b, ACKT11, GGMV2, GOW19} points of view. Over the past three years, similar studies have emerged for additive submonoids of the nonnegative rational numbers, also known as \textit{Puiseux monoids} \cite{fG19a, fG17, fG18, fG19b, GG17, GO17}. 

The purpose of our work here is to highlight a class of Puiseux monoids with extremely nice factorization properties. This is in line with the earlier work done for numerical monoids. Indeed, several of the papers we have already cited are dedicated to showing that while general numerical monoids have complicated factorization properties, those that are generated by an arithmetic sequence have very predictable factorization invariants (see \cite{ACHP07, BCKR, CCMMP17, CGL09}). After fixing a positive rational $r>0$, we will study the additive submonoid of~$\qq_{\ge 0}$ generated by the set $\{r^n \mid n \in \nn_0 \}$. We denote this monoid by $S_r$, that is, $S_r := \langle r^n \mid n \in \nn_0 \rangle$ (cf. Definition \ref{semiring}). Observe that $S_r$ is also closed under multiplication and, therefore, it is a \textit{semiring}. Moreover, the semiring $S_r$ is \emph{cyclic}, which means that $S_r$ is generated as a semiring by only one element, namely $r$. We emphasize that when dealing with $S_r$, we will only be interested in factorizations with regard to its additive operation. However, we will use the term ``rational cyclic semiring" throughout this paper to represent the longer term ``Puiseux monoid generated by a geometric sequence."

We break our work into five sections. Our paper is self-contained and all necessary background and definitions can be found in Section~\ref{sec:bfactsdefs}. In Section~\ref{sec:set of length} we completely describe the structure of the sets of lengths in $S_r$, showing that such sets are always arithmetic progressions (Theorem~\ref{thm:sets of lengths}). In Section~\ref{sec:elasticity} we investigate the elasticity of~$S_r$ (Corollary \ref{thm:sets of lengths}) and explore in Propositions~\ref{accepted},~\ref{prop:set of elasticities}, and~\ref{local} the notions of accepted, full, and local elasticity. Finally, in Section~\ref{sec:tame degree} we study the omega primality of $S_r$ (Proposition \ref{prop:omega primality}), and use it to characterize the semirings $S_r$ that are locally and globally tame (Theorem \ref{thm:cyclic semirings are no locally tame}).

\section{Basic Facts and Definitions}
\label{sec:bfactsdefs}

In this section we review some of the standard concepts we shall be using later. The book~\cite{pG01} by Grillet provides a nice introduction to commutative monoids while the book~\cite{GH06} by Geroldinger and Halter-Koch offers extensive background in non-unique factorization theory of commutative domains and monoids. Throughout our exposition, we let $\mathbb{N}$ denote the set of positive integers, and we set $\nn_0 := \nn \cup \{0\}$. For $a, b \in \rr \cup \{\pm \infty \}$, let
\[
	\llb a, b \rrb = \{ x \in \zz \colon a \le x \le b \}
\]
be the discrete interval between $a$ and $b$. In addition, for $X \subseteq \rr$ and $r \in \rr$, we set
\[
	X_{> r} := \{x \in X \mid x > r\},
\]
and we use the notations $X_{< r}$ and $X_{\ge r}$ in a similar manner. If $q \in \qq_{> 0}$, then we call the unique $a,b \in \nn$ such that $q = a/b$ and $\gcd(a,b)=1$ the \emph{numerator} and \emph{denominator} of $q$ and denote them by $\mathsf{n}(q)$ and $\mathsf{d}(q)$, respectively.

\subsection{Atomic Monoids} The unadorned term \emph{monoid} always means commutative cancellative semigroup with identity and, unless otherwise specified, each monoid here is written additively. A monoid is called \emph{reduced} if its only unit (i.e., invertible element) is $0$. For a monoid~$M$, we let $M^\bullet$ denote the set $M \! \setminus \! \{0\}$. For the remainder of this section, let $M$ be a reduced monoid. For $x,w \in M$, we say that $x$ \emph{divides} $w$ \emph{in} $M$ and write $x \mid_M w$ provided that $w = x + y$ for some $y \in M$. An element $p \in M^\bullet$ is called \emph{prime} if $p \mid_M x+y$ for some $x,y \in M$ implies that either $p \mid_M x$ or $p \mid_M y$.

For $S \subseteq M$ we write $M = \langle S \rangle$ when $M$ is generated by $S$, that is, no submonoid of $M$ strictly contained in $M$ contains $S$. We say that $M$ is \emph{finitely generated} if it can be generated by a finite set. An element $a \in M^\bullet$ is called an \emph{atom} provided that for each pair of elements $x,y \in M$ such that $a = x+y$ either $x=0$ or $y=0$. It is not hard to verify that every prime element is an atom. The set of atoms of $M$ is denoted by $\mathcal{A}(M)$. Clearly, every generating set of $M$ must contain $\mathcal{A}(M)$. If $\mathcal{A}(M)$ generates $M$, then $M$ is called \emph{atomic}. On the other hand, $M$ is called \emph{antimatter} when $\mathcal{A}(M)$ is empty.

Every submonoid $N$ of $(\nn_0,+)$ is finitely generated and atomic. Since $N$ is reduced, $\mathcal{A}(N)$ is the unique minimal generating set of $N$. When $\nn_0 \setminus N$ is finite, $N$ is called \emph{numerical monoid}. It is not hard to check that every submonoid of $(\nn_0,+)$ is isomorphic to a numerical monoid. 
If $N$ is a numerical monoid, then the \emph{Frobenius number} of $N$, denoted by $\mathcal{F}(N)$, is the largest element in $\mathbb{N}_0 \setminus N$. For an introduction to numerical monoids see~\cite{GR09} and for some of their many applications see~\cite{AG16}.

A submonoid of $(\qq_{\ge 0},+)$ is called a \emph{Puiseux monoid}. In particular, every numerical monoid is a Puiseux monoid. However, Puiseux monoids might not be finitely generated nor atomic. For instance, $\langle 1/2^n \mid n \in \nn\rangle$ is a non-finitely generated Puiseux monoid with empty set of atoms. A Puiseux monoid is finitely generated if and only if it is isomorphic to a numerical monoid~\cite[Proposition~3.2]{fG17}. On the other hand, a Puiseux monoid $M$ is atomic provided that $M^\bullet$ does not have $0$ as a limit point~\cite[Theorem~3.10]{fG17} (cf. Proposition~\ref{prop:BF sufficient condition}).

\subsection{Factorization Invariants} The \emph{factorization monoid} of $M$ is the free commutative monoid on $\mathcal{A}(M)$ and is denoted by $\mathsf{Z}(M)$. The elements of $\mathsf{Z}(M)$ are called \emph{factorizations}. If $z = a_1 \dots a_n$ is a factorization of $M$ for some $a_1, \dots, a_n \in \mathcal{A}(M)$, then $n$ is called the \emph{length} of $z$ and is denoted by $|z|$. The unique monoid homomorphism $\phi \colon \mathsf{Z}(M) \to M$ satisfying $\phi(a) = a$ for all $a \in \mathcal{A}(M)$ is called the \emph{factorization homomorphism} of $M$. For each $x \in M$ the set
\[
	\mathsf{Z}(x) := \phi^{-1}(x) \subseteq \mathsf{Z}(M)
\]
is called the \emph{set of factorizations} of $x$, while the set
\[
	\mathsf{L}(x) := \{|z| : z \in \mathsf{Z}(x)\}
\]
is called the \emph{set of lengths} of $x$. If $\mathsf{L}(x)$ is a finite set for all $x \in M$, then $M$ is called a \emph{BF-monoid}. The following proposition gives a sufficient condition for a Puiseux monoid to be a BF-monoid.

\begin{prop} \cite[Proposition~4.5]{fG19a} \label{prop:BF sufficient condition}
	Let $M$ be a Puiseux monoid. If $0$ is not a limit point of $M^\bullet$, then $M$ is a BF-monoid.
\end{prop}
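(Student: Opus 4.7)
The plan is straightforward and the proof is really a one-line density-type argument, so I will be explicit about each step rather than claim any serious difficulty.

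First, I would unpack the hypothesis that $0$ is not a limit point of $M^\bullet$. This means there is some $\varepsilon \in \mathbb{R}_{>0}$ such that $M^\bullet \cap (0, \varepsilon) = \emptyset$, i.e.\ every nonzero element of $M$ satisfies $m \ge \varepsilon$. In particular, every atom $a \in \mathcal{A}(M)$ is a nonzero element of $M$, so $a \ge \varepsilon$ for all $a \in \mathcal{A}(M)$.

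Next, I would fix an arbitrary $x \in M$ and bound the lengths of its factorizations using the observation just made. If $z = a_1 \cdots a_n \in \mathsf{Z}(x)$ with each $a_i \in \mathcal{A}(M)$, then applying the factorization homomorphism gives
\[
	x \;=\; \phi(z) \;=\; a_1 + a_2 + \cdots + a_n \;\ge\; n\varepsilon,
\]
since the addition in $M$ is the usual addition on $\mathbb{Q}_{\ge 0}$ and each summand is at least $\varepsilon$. Rearranging, $|z| = n \le x/\varepsilon$.

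Finally, I would conclude: the previous inequality shows that $\mathsf{L}(x) \subseteq \llb 0, x/\varepsilon \rrb$, so $\mathsf{L}(x)$ is finite for every $x \in M$. Atomicity of $M$ under the same hypothesis has already been recorded in the text (via \cite[Theorem~3.10]{fG17}), so together these give that $M$ is a BF-monoid. There is no real obstacle here; the only thing to get right is being clean about the role of the uniform lower bound $\varepsilon$, which immediately converts an additive decomposition of $x$ in $\mathbb{Q}_{\ge 0}$ into a bound on the number of summands.
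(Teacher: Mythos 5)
Your proof is correct, and it is the standard argument (the uniform lower bound $\varepsilon$ on $M^\bullet$ forces $|z|\le x/\varepsilon$ for every $z\in\mathsf{Z}(x)$), which is essentially the proof given in the cited reference \cite[Proposition~4.5]{fG19a}; the present paper simply quotes the result without reproducing a proof. Your remark on atomicity is a sensible precaution, since the usual definition of a BF-monoid also requires $\mathsf{Z}(x)\neq\emptyset$, and \cite[Theorem~3.10]{fG17} supplies exactly that under the same hypothesis.
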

\noindent The \emph{system of sets of lengths} of $M$ is defined by
\[
	\mathcal{L}(M) := \{\mathsf{L}(x) \mid x \in M\}.
\]
The system of sets of lengths of numerical monoids has been studied in~\cite{ACHP07} and~\cite{GS18}, while the system of sets of lengths of Puiseux monoids was first studied in~\cite{fG19b}. In addition, a friendly introduction to sets of lengths and the role they play in factorization theory is surveyed in~\cite{aG16}. If $M$ is a BF-monoid and for each nonempty subset $S \subseteq \mathbb{N}_{\geq 2}$ there exists $x \in M$ with $\mathsf{L}(x) = S$, then we say that $M$ has the \emph{Kainrath property} (see \cite{K}). In a monoid with the Kainrath property, all possible sets of lengths are obtained.

For $x \in M^\bullet$, a positive integer $d$ is said to be a \emph{distance} of $x$ provided that the equality $\mathsf{L}(x) \cap \llb \ell, \ell + d \rrb = \{\ell, \ell + d\}$ holds for some $\ell \in \mathsf{L}(x)$. The set consisting of all the distances of $x$ is denoted by $\Delta(x)$ and called the \emph{delta set} of $x$. In addition, the set
\[
	\Delta(M) := \bigcup_{x \in M^\bullet} \Delta(x)
\]
is called the \emph{delta set} of the monoid $M$. The delta set of numerical monoids has been studied by the first author \emph{et al.} (see~\cite{BCKR, CKLNZ14} and references therein).

For two factorizations $z = \sum_{a \in \mathcal{A}(M)} \mu_a a$ and $z' = \sum_{a \in \mathcal{A}(M)} \nu_a a$ in $\mathsf{Z}(M)$, we set
\[
	\gcd(z,z') := \sum_{a \in \mathcal{A}(M)} \min\{\mu_a, \nu_a\} a,
\]
and we call the factorization $\gcd(z,z')$ the \emph{greatest common divisor} of $z$ and $z'$. In addition, we call
\[
	\mathsf{d}(z,z') := \max \big\{ |z| - |\gcd(z,z')|, |z'| - |\gcd(z,z')| \big\}
\]
the \emph{distance} between $z$ to $z'$ in $\mathsf{Z}(M)$. For $N \in \nn_0 \cup \{\infty\}$, a finite sequence $z_0, z_1, \dots, z_k$ in $\mathsf{Z}(x)$ is called an \emph{$N$-chain of factorizations} connecting $z$ and $z'$ if $z_0 = z$, $z_k = z'$, and $\mathsf{d}(z_{i-1}, z_i) \le N$ for $i \in \llb 1, k \rrb$. For $x \in M$, let $\mathsf{c}(x)$ denote the smallest $n \in \nn_0 \cup \{\infty\}$ such that for any two factorizations in $\mathsf{Z}(x)$ there exists an $n$-chain of factorizations connecting them. We call $\mathsf{c}(x)$ the \emph{catenary degree} of $x$ and we call
\[
	\mathsf{c}(M) := \sup \{ \mathsf{c}(x) \mid x \in M \} \in \nn_0 \cup \{ \infty \}
\]
the \emph{catenary degree} of $M$. In addition, the set
\[
	\mathsf{Ca}(M) := \{\mathsf{c}(x) \mid x \in M \ \text{and} \ \mathsf{c}(x) > 0\}
\]
is called the \emph{set of positive catenary degrees}. Recent studies of the catenary degree of numerical monoids can be found in~\cite{CCMMP17} and~\cite{OP18}.   

We offer the reader in Tables~\ref{Table 1} and~\ref{Table 2} a comparison of the known factorization properties between general numerical monoids and Puiseux monoids. Table~\ref{Table 1} considers traditionally global factorization properties whose roots reach back into commutative algebra. Table~\ref{Table 2} considers the computation of factorization invariants which have become increasingly popular over the past 20 years.  Definitions related to the omega invariant and the tame degree can be found in Section~\ref{sec:tame degree}.

{\footnotesize
 	 \renewcommand{\arraystretch}{1.5}
 	 \begin{table}[t] \caption{Monoidal Factorization Properties: Numerical vs. Puiseux Monoids}\label{Table 1}
  	\begin{tabular}{ | p{7cm} | p{7cm}  |} \hline
		 \rowcolor{bleudefrance} \rule{0pt}{20pt} \textbf{Let $N$ be a numerical monoid} & \textbf{Let $M$ be a Puiseux monoid} \\ \hline
		\rowcolor{blizzardblue}\multicolumn{2}{|c|}{Is it finitely generated?}\\ \hline
     	Always. & Not always: $M$ is finitely generated if and only if $M$ is isomorphic to a numerical monoid \cite[Prop.~3.2]{fG17}.  \\ \hline
		\rowcolor{blizzardblue}\multicolumn{2}{|c|}{Is it atomic?}\\ \hline
     	Always. & Not always: $\langle 1/2^n \mid n \in \mathbb{N} \rangle$ is not atomic. $M$ is atomic if $0$ is not a limit point of $M$ \cite[Thm.~3.10]{fG17}.
     	 \\ \hline
        \rowcolor{blizzardblue}\multicolumn{2}{|c|}{Is it a BF-monoid (BFM)?}\\ \hline
     	 Always \cite[Prop. 2.7.8]{GH06}. & Not always: $M$ can be atomic and not a BFM \cite[Ex.~5.7]{fG19a}.   \\ \hline
        \rowcolor{blizzardblue}\multicolumn{2}{|c|}{Is it an FF-monoid (FFM)?}\\ \hline
     	Always \cite[Prop. 2.7.8]{GH06}. & Not always: $M$ can be a BFM and not an FFM \cite[Ex.~4.9]{GO17}.
     	\\ \hline
		\rowcolor{blizzardblue}\multicolumn{2}{|c|}{Is it a Krull monoid?}\\ \hline
		 Not always: $N$ is a Krull monoid if and only if $N$ is isomorphic to $(\mathbb{N}_0, +)$ \cite[Thm.~5.5\,(2)]{GSZ17}. & 
		 Not always: $M$ is a Krull monoid if and only if $M$ is isomorphic to $(\mathbb{N}_0, +)$ \cite[Thm.~6.6]{fG18}. \\ \hline
		 \end{tabular} \end{table}
		 
		 
		 \begin{table}[t] \caption{Monoidal Factorization Invariants: Numerical vs. Puiseux Monoids}\label{Table 2}
		 	\begin{tabular}{ | p{7cm} | p{7cm}  |} \hline
		 \rowcolor{bleudefrance} \rule{0pt}{20pt} \textbf{Let $N$ be a numerical monoid} & \textbf{Let $M$ be a Puiseux monoid} \\ \hline
		\rowcolor{blizzardblue}\multicolumn{2}{|c|}{System of sets of lengths}\\ \hline
		Sets of lengths in $N$ are almost arithmetic progressions \cite[Thm.~4.3.6]{GH06}.
		Also, for $L \subseteq \mathbb{N}_{\ge 2}$, there is a numerical monoid $N$ and $x \in N$ with $\mathsf{L}(x) = L$ \cite[Thm.~3.3]{GS18}. &
		Sets of lengths can have arbitrary behavior as there exists a Puiseux monoid satisfying the Kainrath property \cite[Thm.~3.6]{fG19b}.
		 \\ \hline
		\rowcolor{blizzardblue}\multicolumn{2}{|c|}{Elasticity}\\ \hline		
		$\rho(N) = \frac{\max {\mathcal{A}(N)}}{\min {\mathcal{A}(N)}}$ is always finite and accepted \cite[Thm. 2.1]{CHM}.
		Moreover, $N$ is fully elastic if and only if $N$ is isomorphic to $(\mathbb{N}_0, +)$ \cite[Thm.~2.2]{CHM}. 
		& If $M$ is atomic, then $\rho(M) = \infty$ if $0$ is a limit point of $\mathcal{A}(M)$ and $\rho(M) = \frac{\sup {\mathcal{A}(M)}}{\inf {\mathcal{A}(M)}}$ otherwise \cite[Thm.~3.2]{GO17}.  Moreover, $\rho(M)$ is accepted if and only if $\mathcal{A}(M)$ has a minimum and a maximum in $\mathbb{Q}$ \cite[Thm.~3.4]{GO17}.
		\\ \hline
		\rowcolor{blizzardblue}\multicolumn{2}{|c|}{Catenary degree}\\ \hline
		$\mathsf{c}(N) \le \frac{\mathcal{F}(N) + \max{\mathcal{A}(N)}}{\min{\mathcal{A}(N)}} + 1$ \cite[Ex. 3.1.6]{GH06}. &
		No known general results.\\ \hline
		\rowcolor{blizzardblue}\multicolumn{2}{|c|}{Tame degree}\\ \hline
		Always globally tame (and, consequently, locally tame) \cite[Thm. 3.1.4]{GH06}. &
		Not always locally tame (see Theorem~\ref{thm:cyclic semirings are no locally tame}). \\ \hline
		\rowcolor{blizzardblue}\multicolumn{2}{|c|}{Omega primality}\\ \hline
		$\omega(N) < \infty$ always.
		&
		$\omega(S_r) = \infty$ when $r \in \qq \cap (0,1)$ and $\mathsf{n}(r) > 1$ (see Theorem~\ref{thm:cyclic semirings are no locally tame}).\\ \hline
  	\end{tabular} \end{table}
}  	
  	
\medskip
\subsection{Cyclic Rational Semirings} As mentioned in the introduction, in this paper we study factorization invariants of those Puiseux monoids that are generated as a semiring by a single element.

\begin{definition}\label{semiring}
	For $r \in \qq_{>0}$, we call \emph{cyclic rational semiring} to the Puiseux monoid~$S_r$ additively generated by the nonnegative powers of $r$, i.e., $S_r = \big\langle r^n \mid n \in \nn_0 \rangle$.
\end{definition}

Although no systematic study of the factorization of cyclic rational semirings has been carried out so far, in~\cite{GG17} the atomicity of $S_r$ was first considered and classified in terms of the parameter $r$, as the next result indicates.

\begin{theorem} \cite[Theorem~6.2]{GG17}\label{thm:atomic classification of multiplicative cyclic Puiseux monoids}
	For $r \in \qq_{> 0}$, let $S_r$ be the cyclic rational semiring generated by $r$. Then the following statements hold.
	\begin{enumerate}
		\item If $\mathsf{d}(r)=1$, then $S_r$ is atomic with $\mathcal{A}(S_r) = \{1\}$.
		\vspace{3pt}
		\item If $\mathsf{d}(r) > 1$ and $\mathsf{n}(r) = 1$, then $S_r$ is antimatter.
		\vspace{3pt}
		\item If $\mathsf{d}(r) > 1$ and $\mathsf{n}(r) > 1$, then $S_r$ is atomic with $\mathcal{A}(S_r) = \{r^n \mid n \in \nn_0\}$.
	\end{enumerate}
\end{theorem}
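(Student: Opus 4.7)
For parts (1) and (2), the arguments are direct. Part (1) follows because $\mathsf{d}(r) = 1$ forces $r \in \nn$ and hence $S_r \subseteq \nn_0$; since $r^0 = 1 \in S_r$, we must have $S_r = \nn_0$, whose unique atom is $1$. For part (2), I would exploit the identity $d \cdot r^{i+1} = r^i$ (valid because $r = 1/d$): given any $z = \sum c_i r^i \in S_r^\bullet$, the shifted sum $w := \sum c_i r^{i+1}$ lies in $S_r^\bullet$ and satisfies $z = d \cdot w$. Since $d \ge 2$, this produces the nontrivial decomposition $z = w + (d-1)w$ inside $S_r^\bullet$, so no element of $S_r$ can be an atom.

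The substantive work is part (3), where I need to establish $\mathcal{A}(S_r) = \{r^k : k \in \nn_0\}$. The inclusion $\subseteq$ is easy: if $a \in S_r^\bullet$ has a representation $a = \sum c_i r^i$ with $\sum c_i \ge 2$, then for any $i_0$ with $c_{i_0} \ge 1$ the decomposition $a = r^{i_0} + (a - r^{i_0})$ writes $a$ as a sum of two nonzero elements of $S_r$, so any atom must satisfy $\sum c_i = 1$, forcing $a = r^j$. For the reverse inclusion, my plan is to filter $S_r$ by the ascending chain of submonoids $M_m := \langle r^0, r^1, \ldots, r^m \rangle$, whose union is $S_r$, and to transfer the atomicity question to each $M_m$ by noting that any nontrivial decomposition of $r^k$ in $S_r$ must already lie inside some $M_m$. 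Clearing denominators, the map $z \mapsto d^m z$ identifies $M_m$ with the numerical semigroup $T_m := \langle n^j d^{m-j} : 0 \le j \le m\rangle$, sending $r^k$ to $g_k := n^k d^{m-k}$. It therefore suffices to show that $g_k$ is a minimal generator of $T_m$ for every $m \ge k$.

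The heart of the proof is a double-valuation argument that rules out any relation $g_k = \sum_{j \ne k} c_j g_j$ with $c_j \in \nn_0$ not all zero. Letting $I := \min\{j : c_j > 0\}$ and $J := \max\{j : c_j > 0\}$, I would pick a prime $p \mid d$ (so $p \nmid n$) and observe that $\mathsf{v}_p(g_j) = (m - j)\mathsf{v}_p(d)$ is strictly decreasing in $j$; hence if $J < k$, every term on the right side of the relation has strictly larger $\mathsf{v}_p$ than $g_k$, contradicting the equality. Therefore $J > k$. Symmetrically, a prime $q \mid n$ (so $q \nmid d$) gives $\mathsf{v}_q(g_j)$ strictly increasing in $j$, forcing $I < k$. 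To close, I use that the sequence $(g_j)$ is also strictly monotone in magnitude: increasing when $n > d$ and decreasing when $n < d$. In either regime, one of $g_I$ or $g_J$ strictly exceeds $g_k$, yielding either $\sum c_j g_j \ge g_I > g_k$ or $\sum c_j g_j \ge g_J > g_k$ and contradicting the relation. The main subtlety I anticipate is ensuring that the valuation bound cannot be defeated by cancellation from the ``wrong'' side, and verifying that the final magnitude comparison handles the $r < 1$ and $r > 1$ regimes uniformly.
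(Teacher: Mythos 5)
Your proof is correct. Note, however, that the paper offers no proof of this statement to compare against: it is imported verbatim from \cite[Theorem~6.2]{GG17}, so your argument is necessarily an independent one. As such it holds up. Parts (1) and (2) are handled exactly as one would expect (in (2), the identity $r^i = \mathsf{d}(r)\,r^{i+1}$ kills every potential atom). For part (3), your route --- filtering $S_r$ by the finitely generated submonoids $M_m = \langle r^0,\dots,r^m\rangle$, clearing denominators to land in the numerical monoid $T_m = \langle n^jd^{m-j} : 0 \le j \le m\rangle$, and then excluding a relation $g_k = \sum_{j\ne k} c_jg_j$ by playing a prime $p\mid \mathsf{d}(r)$ against a prime $q \mid \mathsf{n}(r)$ to force $I < k < J$, before closing with the monotonicity of $g_j = d^m r^j$ --- is a clean, elementary, and fully self-contained argument. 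The one worry you flagged, that cancellation might defeat the valuation bound, is not an issue: all terms $c_jg_j$ are nonnegative integers, and $\mathsf{v}_p$ of a sum is always at least the minimum of the valuations of its summands, which is all you use. Two small points worth making explicit in a written version: a relation realizing $g_k$ as a sum of two nonzero elements of $T_m$ can always be reduced to one with $c_k = 0$ (otherwise subtracting $g_k$ expresses $0$ as a nonempty sum of positive integers), and $\mathsf{n}(r)\ne\mathsf{d}(r)$ (both exceed $1$ and are coprime), so the final magnitude comparison always has a strict inequality to exploit. This differs in flavor from the treatment in \cite{GG17}, which leans on general atomicity results for monotone Puiseux monoids; your reduction to minimal generators of explicit numerical monoids is arguably more concrete.
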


As a consequence of Theorem~\ref{thm:atomic classification of multiplicative cyclic Puiseux monoids}, the monoid $S_r$ is atomic precisely when $r \in \qq_{> 0}$ and either $r = 1$ or $\mathsf{n}(r) > 1$.
\bigskip

\section{Sets of Lengths Are Arithmetic Sequences}
\label{sec:set of length}

In this section we show that the set of lengths of each element in an atomic rational cyclic semiring $S_r$ is an arithmetic sequence. First, we describe the minimum-length and maximum-length factorizations for elements of $S_r$. We start with the case where $0 < r < 1$.

\begin{lemma} \label{lem:factorization of extremal length II}
	Take $r \in (0,1) \cap \qq$ such that $S_r$ is atomic, and for $x \in S_r^\bullet$ consider the factorization $z = \sum_{i=0}^N \alpha_i r^i \in \mathsf{Z}(x)$, where $N \in \nn$ and $\alpha_0, \dots, \alpha_N \in \nn_0$. The following statements~hold.
	\begin{enumerate}
		\item $\min \mathsf{L}(x) = |z|$ if and only if $\alpha_i < \mathsf{d}(r)$ for $i \in \llb 1, N \rrb$.
		\vspace{3pt}
		
		\item There exists exactly one factorization in $\mathsf{Z}(x)$ of minimum length.
		\vspace{3pt}
		
		\item $\sup \mathsf{L}(x) = \infty$ if and only if $\alpha_i \ge \mathsf{n}(r)$ for some $i \in \llb 0, N \rrb$.
		\vspace{3pt}
		
		\item $|\mathsf{Z}(x)| = 1$ if and only if $|\mathsf{L}(x)| = 1$, in which case, $\alpha_i < \mathsf{n}(r)$ for $i \in \llb 0, N \rrb$.
	\end{enumerate} 
\end{lemma}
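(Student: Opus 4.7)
The proof plan rests on two structural features of $S_r$ when $r = a/b$ with $a = \mathsf{n}(r)$, $b = \mathsf{d}(r)$, and $a < b$. First, the syzygies among the atoms $\{r^n : n \in \nn_0\}$ are generated by the binomial relations $b\, r^i = a\, r^{i-1}$ for $i \geq 1$, so any two factorizations of the same element are connected by a finite chain of \emph{trades} $b\, r^i \leftrightarrow a\, r^{i-1}$; a \emph{down-trade} $b\, r^i \to a\, r^{i-1}$ shortens a factorization by $b - a > 0$, while an \emph{up-trade} $a\, r^{i-1} \to b\, r^i$ lengthens it by the same amount. Second, for each $x \in S_r^\bullet$ there is a \emph{unique} factorization in \emph{reduced form}, meaning $\alpha_i < b$ for all $i \geq 1$. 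Uniqueness is the computational heart of the argument and follows by clearing denominators: if $\sum \alpha_i r^i = \sum \alpha_i' r^i$ with both sides in reduced form, then multiplying through by $b^M$ (for $M$ large) yields an equation in $\nn_0$ whose successive reductions modulo $b$, combined with $\gcd(a,b) = 1$, force $\alpha_i = \alpha_i'$ coordinate by coordinate.

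Granted this, part (1) falls out quickly. If some $\alpha_i \geq b$ with $i \geq 1$, a down-trade strictly shortens $z$, so $z$ is not of minimum length. Conversely, if $\alpha_i < b$ for all $i \geq 1$, then $z$ is already in reduced form; and any other factorization $z'$ can be iteratively down-traded (each step decreases length, so the process terminates) to a reduced form, which must equal $z$ by uniqueness. Hence $|z| \leq |z'|$. Part (2) is now immediate: the reduced form is the only minimum-length factorization.

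For part (3), if some $\alpha_i \geq a$, apply $a\, r^i \to b\, r^{i+1}$ to obtain a factorization of length $|z| + (b-a)$; the new coefficient at position $i+1$ is at least $b > a$, so the up-trade can be iterated at successively higher positions, producing factorizations of unbounded length. Conversely, assume $\alpha_i < a$ for every $i \in \llb 0, N \rrb$. Since $a < b$, we have $\alpha_i < b$ for $i \geq 1$, so by part (1) $z$ is the unique minimum-length factorization. If some $z' \neq z$ were in $\mathsf{Z}(x)$, then $z'$ would down-trade to $z$, and reversing that chain would give a sequence of up-trades starting at $z$; the very first such up-trade requires some $\alpha_j \geq a$, contradicting the hypothesis. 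Thus $\mathsf{Z}(x) = \{z\}$ and $\sup \mathsf{L}(x) = |z| < \infty$.

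Part (4) combines the previous pieces: $|\mathsf{Z}(x)| = 1 \Rightarrow |\mathsf{L}(x)| = 1$ is trivial, and for the converse, if every factorization has the same length, then that common length is $\min \mathsf{L}(x)$ and part (2) says it is attained uniquely, forcing $|\mathsf{Z}(x)| = 1$. When $|\mathsf{Z}(x)| = 1$, no trade can be applied to $z$ — in particular, no up-trade — so $\alpha_i < a$ for every $i \in \llb 0, N \rrb$. The main obstacle in the whole proof is the uniqueness of the reduced form underlying part (1); it is there that the coprimality $\gcd(a,b) = 1$ is essential, and the rest of the argument then follows by straightforward length-counting under trades.
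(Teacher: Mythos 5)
Your proposal is correct and takes essentially the same route as the paper: the uniqueness of the reduced form (all coefficients below $\mathsf{d}(r)$ in positive positions), proved by clearing denominators and using $\gcd(\mathsf{n}(r),\mathsf{d}(r))=1$, is precisely the paper's argument for parts (1)--(2), and the up/down trades $\mathsf{d}(r)\,r^i \leftrightarrow \mathsf{n}(r)\,r^{i-1}$ drive (3) and (4) in both. Your unproved opening assertion that these trades generate all syzygies is never actually needed, since each step of your argument relies only on the termination of down-trading and the uniqueness of the reduced form, both of which you do establish.
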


\begin{proof}
	To verify the direct implication of~(1), we only need to observe that if $\alpha_i \ge \mathsf{d}(r)$ for some $i \in \llb 1,N \rrb$, then the identity $\alpha_i r^i = (\alpha_i - \mathsf{d}(r))r^i + \mathsf{n}(r)r^{i-1}$ would yield a factorization $z'$ in $\mathsf{Z}(x)$ with $|z'| < |z|$. To prove the reverse implication, suppose that $w := \sum_{i=0}^K \beta_i r^i \in \mathsf{Z}(x)$ has minimum length. By the implication already proved, $\beta_i < \mathsf{d}(r)$ for $i \in \llb 1, N \rrb$. Insert zero coefficients if necessary and assume that $K = N$. Suppose, by way of contradiction, that there exists $m \in \llb 1,N \rrb$ such that $\beta_m \neq \alpha_m$ and assume that such index $m$ is as large as possible. Since $z,w \in \mathsf{Z}(x)$ we can write
	\[
		(\alpha_m - \beta_m)r^m = \sum_{i=0}^{m-1} (\beta_i - \alpha_i) r^i.
	\]
	After multiplying the above equality by $\mathsf{d}(r)^m$, it is easy to see that $\mathsf{d}(r) \mid \alpha_m - \beta_m$, which contradicts the fact that $0 < |\alpha_m - \beta_m| \le \mathsf{d}(r)$. Hence $\beta_i = \alpha_i$ for $i \in \llb 0, N \rrb$ and, therefore, $w = z$. As a result, $|z| = |w| = \min \mathsf{L}(x)$. In particular, there exists only one factorization in $\mathsf{Z}(x)$ having minimum length, and~(2) follows.
	
	For the direct implication of~(3), take a factorization $w = \sum_{i=0}^N \beta_i r^i \in \mathsf{Z}(x)$ whose length is not the minimum of $\mathsf{L}(x)$; such a factorization exists because $\sup \mathsf{L}(x) =~\infty$. By part~(1), there exists $i \in \llb 1, N \rrb$ such that $\beta_i \ge \mathsf{d}(r)$. Now we can use the identity $\beta_i r^i = (\beta_i - \mathsf{d}(r))r^i + \mathsf{n}(r)r^{i-1}$ to obtain $w_1 \in \mathsf{Z}(x)$ with $|w_1| < |w|$. Notice that there is an atom (namely $r^{i-1}$) appearing at least $\mathsf{n}(r)$ times in $w_1$. In a similar way we can obtain factorizations $w = w_0, w_1, \dots, w_n$ in $\mathsf{Z}(x)$, where $w_n =: \sum_{i=0}^N \beta'_i r^i \in \mathsf{Z}(x)$ satisfies $\beta_i' < \mathsf{d}(r)$ for $i \in \llb 1, N \rrb$. By~(1) we have that $w_n$ is a factorization of minimum length and, therefore, $z = w_n$ by~(2). Hence $\alpha_i \ge \mathsf{n}(r)$ for some $i \in \llb 0,N \rrb$, as desired. For the reverse implication, it suffices to note that given a factorization $w = \sum_{i=0}^N \beta_i r^i \in \mathsf{Z}(x)$ with $\beta_i \ge \mathsf{n}(r)$ we can use the identity $\beta_i r^i = (\beta_i - \mathsf{n}(r)) r^i + \mathsf{d}(r) r^{i+1}$ to obtain another factorization $w' = \sum_{i=0}^{N+1} \beta'_i r^i \in \mathsf{Z}(x)$ (perhaps $\beta'_{N+1} = 0$) with $|w'| > |w|$ and satisfying $\beta_{i+1} > \mathsf{n}(r)$. 
	
Finally, we argue the reverse implication of~(4) as the direct implication is trivial. To do this, assume that $\mathsf{L}(x)$ is a singleton. Then each factorization of $x$ has minimum length. By~(2) there exists exactly one factorization of minimum length in~$\mathsf{Z}(x)$. Thus, $\mathsf{Z}(x)$ is also a singleton. The last statement of~(4) is straightforward.
\end{proof}

We continue with the case of $r>1$.

\begin{lemma} \label{lem:factorization of extremal length I}
	Take $r \in \qq_{> 1} \setminus \nn$ such that $S_r$ is atomic, and for $x \in S_r^\bullet$ consider the factorization $z = \sum_{i=0}^N \alpha_i r^i \in \mathsf{Z}(x)$, where $N \in \nn$ and $\alpha_0, \dots, \alpha_N \in \nn_0$. The following statements hold.
	\begin{enumerate}
		\item $\min \mathsf{L}(x) = |z|$ if and only if $\alpha_i < \mathsf{n}(r)$ for $i \in \llb 0, N \rrb$.
		\vspace{3pt}
		
		\item There exists exactly one factorization in $\mathsf{Z}(x)$ of minimum length.
		\vspace{3pt}
		
		\item $\max \mathsf{L}(x) = |z|$ if and only if $\alpha_i < \mathsf{d}(r)$ for $i \in \llb 1, N \rrb$.
		\vspace{3pt}
	
		\item There exists exactly one factorization in $\mathsf{Z}(x)$ of maximum length.
		\vspace{3pt}
		
		\item $|\mathsf{Z}(x)| = 1$ if and only if $|\mathsf{L}(x)| = 1$, in which case $\alpha_0 < \mathsf{n}(r)$ and $\alpha_i < \mathsf{d}(r)$ for $i \in \llb 1, N \rrb$.
	\end{enumerate} 
\end{lemma}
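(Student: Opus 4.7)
The plan is to adapt the proof of Lemma~\ref{lem:factorization of extremal length II} to the setting $r > 1$, where $\mathsf{n}(r) > \mathsf{d}(r)$. Consequently, the identity $\alpha_i r^i = (\alpha_i - \mathsf{n}(r))r^i + \mathsf{d}(r)r^{i+1}$ (valid when $\alpha_i \ge \mathsf{n}(r)$) now \emph{shortens} a factorization by $\mathsf{n}(r)-\mathsf{d}(r) > 0$, whereas $\alpha_i r^i = (\alpha_i - \mathsf{d}(r))r^i + \mathsf{n}(r)r^{i-1}$ (valid when $\alpha_i \ge \mathsf{d}(r)$ and $i \ge 1$) \emph{lengthens} it by the same amount. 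Since $\inf \mathcal{A}(S_r) = 1 > 0$, Proposition~\ref{prop:BF sufficient condition} guarantees that $S_r$ is a BF-monoid, so both $\min \mathsf{L}(x)$ and $\max \mathsf{L}(x)$ are attained.

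The forward implications of (1) and (3) follow by applying the appropriate length-changing identity once. The reverse implication of (3) and claim (4) are formally identical to their analogues in Lemma~\ref{lem:factorization of extremal length II}(1)--(2): given two factorizations of $x$ with all coefficients indexed by $i \in \llb 1, N \rrb$ strictly below $\mathsf{d}(r)$, pad them to a common degree, take $m$ to be the largest index of disagreement, rearrange to isolate the $m$-th term, multiply through by $\mathsf{d}(r)^m$, and reduce modulo $\mathsf{d}(r)$; using $\gcd(\mathsf{n}(r),\mathsf{d}(r)) = 1$ together with $|\alpha_m - \beta_m| < \mathsf{d}(r)$ for $m \ge 1$ yields the desired contradiction, with the case $m = 0$ handled by the equality $z = w$ of the underlying values.

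The principal obstacle is the uniqueness required for the reverse implication of (1) and for claim (2). With only the weaker constraint $\alpha_i < \mathsf{n}(r)$ in hand, the mod-$\mathsf{d}(r)$ step of Lemma~\ref{lem:factorization of extremal length II} no longer discriminates, since $|\alpha_m - \beta_m|$ can now exceed $\mathsf{d}(r)$. My plan is to reverse the direction of the induction. For $z = \sum_{i=0}^N \alpha_i r^i$ and $z' = \sum_{i=0}^N \alpha'_i r^i$ in $\mathsf{Z}(x)$ with all coefficients bounded by $\mathsf{n}(r)$, clear denominators to obtain
\[
	\sum_{i=0}^N (\alpha_i - \alpha'_i)\,\mathsf{n}(r)^i\,\mathsf{d}(r)^{N-i} = 0,
\]
and reduce modulo $\mathsf{n}(r)$: only the $i = 0$ term survives, so $\mathsf{n}(r) \mid (\alpha_0 - \alpha'_0)\mathsf{d}(r)^N$, hence $\mathsf{n}(r) \mid (\alpha_0 - \alpha'_0)$ by coprimality, and the bound $|\alpha_0 - \alpha'_0| < \mathsf{n}(r)$ forces $\alpha_0 = \alpha'_0$. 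Once the common $i = 0$ term is cancelled, every remaining summand is divisible by $\mathsf{n}(r)$; dividing through by $\mathsf{n}(r)$ and reindexing yields an instance of the same problem with $N$ replaced by $N-1$, and iteration produces $\alpha_i = \alpha'_i$ for every $i \in \llb 0, N \rrb$.

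Claim (5) is then immediate: the forward direction is trivial, and if $|\mathsf{L}(x)| = 1$ then $\min \mathsf{L}(x) = \max \mathsf{L}(x)$, so every factorization of $x$ is simultaneously the unique minimum-length factorization from (2) and the unique maximum-length factorization from (4), forcing $|\mathsf{Z}(x)| = 1$. Intersecting the characterizations in (1) and (3) yields the displayed constraint $\alpha_0 < \mathsf{n}(r)$ together with $\alpha_i < \mathsf{d}(r)$ for $i \in \llb 1, N \rrb$.
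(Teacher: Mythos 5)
Your proposal is correct and follows essentially the same route as the paper: the length-changing identities give the forward implications, the mod-$\mathsf{d}(r)$ argument at the largest index of disagreement gives (3)--(4), and the mod-$\mathsf{n}(r)$ argument gives (1)--(2); your bottom-up iterative cancellation is just an unrolled version of the paper's ``smallest index of disagreement'' step. Part (5) is handled identically.
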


\begin{proof}
	To argue the direct implication of~(1) it suffices to note that if $\alpha_i \ge \mathsf{n}(r)$ for some $i \in \llb 0, N \rrb$, then we can use the identity $\alpha_i r^i = (\alpha_i - \mathsf{n}(r))r^i + \mathsf{d}(r)r^{i+1}$ to obtain a factorization $z'$ in $\mathsf{Z}(x)$ satisfying $|z'| < |z|$. For the reverse implication, suppose that $w = \sum_{i=0}^K \beta_i r^i$ is a factorization in $\mathsf{Z}(x)$ of minimum length. There is no loss in assuming that $K = N$. Note that $\beta_i < \mathsf{n}(r)$ for each $i \in \llb 0, N \rrb$ follows from the direct implication. Now suppose for a contradiction that $w \neq z$, and let $m$ be the smallest nonnegative integer satisfying that $\alpha_m \neq \beta_m$. Then
	\begin{equation} \label{eq:set length of rational semirings II}
		(\alpha_m - \beta_m) r^m = \sum_{i = m+1}^N (\beta_i - \alpha_i) r^i.
	\end{equation}
	After clearing the denominators in~(\ref{eq:set length of rational semirings II}), it is easy to see that $\mathsf{n}(r) \mid \alpha_m - \beta_m$, which implies that $\alpha_m = \beta_m$, a contradiction. Hence $w = z$ and so $|z| = |w| = \min \mathsf{L}(x)$. We have also proved that there exists a unique factorization of $x$ of minimum length, which is~(2).
	
	For the direct implication of~(3), it suffices to observe that if $\alpha_i \ge \mathsf{d}(r)$ for some $i \in \llb 1, N \rrb$, then we can use the identity $\alpha_i r^i = \big( \alpha_i - \mathsf{d}(r) \big) r^i + \mathsf{n}(r) r^{i-1}$ to obtain a factorization $z'$ in $\mathsf{Z}(x)$ satisfying $|z'| > |z|$. For the reverse implication of~(3), take $w = \sum_{i=0}^K \beta_i r^i$ to be a factorization in $\mathsf{Z}(x)$ of maximum length ($S_r$ is a BF-monoid by Proposition~\ref{prop:BF sufficient condition}). Once again, there is no loss in assuming that $K = N$. The maximality of $|w|$ now implies that $\beta_i < \mathsf{d}(r)$ for $i \in \llb 1, N \rrb$. Suppose, by way of contradiction, that $z \neq u$. Then take $m$ be the smallest index such that $\alpha_m \neq \beta_m$. Clearly, $m \ge 1$ and
	\begin{equation} \label{eq:set length of rational semirings I}
		(\alpha_m - \beta_m) r^m = \sum_{i=0}^{m-1} (\beta_i - \alpha_i) r^i.
	\end{equation}
	After clearing denominators, it is easy to see that $\mathsf{d}(r) \mid \alpha_m - \beta_m$, which contradicts that $0 < |\alpha_M - \beta_M| < \mathsf{d}(r)$. Hence $\alpha_i = \beta_i$ for each $i \in \llb 1, N \rrb$, which implies that $z = w$. Thus, $\max \mathsf{L}(x) = |z|$. In particular, there exists only one factorization of $x$ of maximum length, which is condition~(4).
	
	The direct implication of~(5) is trivial. For the reverse implication of~(5), suppose that $\mathsf{L}(x)$ is a singleton. Then any factorization in $\mathsf{Z}(x)$ is a factorization of minimum length. Since we proved in the first paragraph that $\mathsf{Z}(x)$ contains only one factorization of minimum length, we have that $\mathsf{Z}(x)$ is also a singleton. The last statement of~(5) is an immediate consequence of~(1) and~(3).
\end{proof}

We are in a position now to describe the sets of lengths of any atomic cyclic rational semiring.

\begin{theorem} \label{thm:sets of lengths}
	Take $r \in \qq_{>0}$ such that $S_r$ is atomic.
	\begin{enumerate}
		\item If $r < 1$, then for each $x \in S_r$ with $|\mathsf{Z}(x)| > 1$,
		\[
			\mathsf{L}(x) = \big\{ \min \mathsf{L}(x) + k \big( \mathsf{d}(r) - \mathsf{n}(r)\big) \mid k \in \nn_0 \big\}.
		\]
		\item If $r \in \nn$, then $|\mathsf{Z}(x)| = |\mathsf{L}(x)| = 1$ for all $x \in S_r$.
		\vspace{3pt}
		\item If $r \in \qq_{> 1} \setminus \nn$, then for each $x \in S_r$ with $|\mathsf{Z}(x)| > 1$,
		\[
			\mathsf{L}(x) = \bigg\{ \min \mathsf{L}(x) + k \big( \mathsf{n}(r) - \mathsf{d}(r) \big) \ \bigg{|} \ 0 \le k \le \frac{\max \mathsf{L}(x) - \min \mathsf{L}(x)}{\mathsf{n}(r) - \mathsf{d}(r)} \bigg\}.
		\]
	\end{enumerate}
Thus, $\mathsf{L}(x)$ is an arithmetic progression with difference $|\mathsf{n}(r) - \mathsf{d}(r)|$ for all $x \in S_r$.
\end{theorem}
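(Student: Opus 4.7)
The plan is to dispatch the three cases using Lemmas~\ref{lem:factorization of extremal length II} and~\ref{lem:factorization of extremal length I}, and then to read off the closing statement from the case analysis. Case~(2) is immediate: $r \in \nn$ forces $\mathsf{d}(r) = 1$, so by Theorem~\ref{thm:atomic classification of multiplicative cyclic Puiseux monoids}(1) we have $\mathcal{A}(S_r) = \{1\}$, whence $|\mathsf{Z}(x)| = |\mathsf{L}(x)| = 1$ for every $x \in S_r$.

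For case~(1), fix $x \in S_r^\bullet$ with $|\mathsf{Z}(x)| > 1$ and note $\mathsf{d}(r) > \mathsf{n}(r) > 1$. The engine is the length-decreasing substitution $\alpha_i r^i \mapsto (\alpha_i - \mathsf{d}(r)) r^i + \mathsf{n}(r) r^{i-1}$, legal whenever some $\alpha_i \ge \mathsf{d}(r)$ at a positive index and dropping length by $\mathsf{d}(r) - \mathsf{n}(r)$. I would iterate this move from an arbitrary $w \in \mathsf{Z}(x)$; since length is a positive integer the process terminates, and the terminal factorization, which no longer admits this move at any positive index, satisfies the criterion of Lemma~\ref{lem:factorization of extremal length II}(1) and thus, by part~(2), coincides with the unique minimum-length factorization $z$. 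This simultaneously yields $|w| - \min \mathsf{L}(x) \in (\mathsf{d}(r) - \mathsf{n}(r))\,\nn_0$ and places every intermediate length along the contraction chain inside $\mathsf{L}(x)$. To see that every value of the proposed AP is attained, I would verify $\sup \mathsf{L}(x) = \infty$: Lemma~\ref{lem:factorization of extremal length II}(4) turns $|\mathsf{Z}(x)| > 1$ into $|\mathsf{L}(x)| > 1$, so some $w \ne z$ exists in $\mathsf{Z}(x)$; by Lemma~\ref{lem:factorization of extremal length II}(1)--(2) some positive-index coefficient of $w$ is $\ge \mathsf{d}(r) > \mathsf{n}(r)$, and Lemma~\ref{lem:factorization of extremal length II}(3) applied to $w$ then gives $\sup \mathsf{L}(x) = \infty$. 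The contraction chains originating at factorizations of arbitrarily large length therefore cover the entire progression.

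Case~(3) runs symmetrically with two-sided extrema. Lemma~\ref{lem:factorization of extremal length I}(2) and~(4) supply unique minimum- and maximum-length factorizations, and Proposition~\ref{prop:BF sufficient condition} guarantees that $\mathsf{L}(x)$ is finite. Starting from any $w \in \mathsf{Z}(x)$, I would run the expansion $\alpha_i r^i \mapsto (\alpha_i - \mathsf{d}(r)) r^i + \mathsf{n}(r) r^{i-1}$, raising length by $\mathsf{n}(r) - \mathsf{d}(r)$ when some $\alpha_i \ge \mathsf{d}(r)$ for $i \ge 1$, and the contraction $\alpha_i r^i \mapsto (\alpha_i - \mathsf{n}(r)) r^i + \mathsf{d}(r) r^{i+1}$, lowering length by the same amount when some $\alpha_i \ge \mathsf{n}(r)$. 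Each process terminates — expansion by the BF property, contraction by positivity of length — at a factorization meeting the coefficient criterion of Lemma~\ref{lem:factorization of extremal length I}(3) or~(1) respectively, hence at the unique maximum- or minimum-length factorization. Stitching the two chains together shows $\mathsf{L}(x)$ contains exactly the lengths $\min \mathsf{L}(x) + k(\mathsf{n}(r) - \mathsf{d}(r))$ for $0 \le k \le (\max \mathsf{L}(x) - \min \mathsf{L}(x))/(\mathsf{n}(r) - \mathsf{d}(r))$. The closing universal statement then drops out of cases~(1)--(3), since the common difference is $|\mathsf{n}(r) - \mathsf{d}(r)|$ in every case, with case~(2) read as a degenerate AP. I expect the main obstacle to be the $\sup \mathsf{L}(x) = \infty$ step of case~(1); everything else reduces to mechanical chain manipulation on the extremal-length lemmas.
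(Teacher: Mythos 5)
Your proposal is correct and follows essentially the same route as the paper: the same two substitution identities drive chains of factorizations toward the extremal-length factorizations characterized in Lemmas~\ref{lem:factorization of extremal length II} and~\ref{lem:factorization of extremal length I}, and the forward/reverse inclusions are obtained exactly as in the paper's argument. The only (harmless) variation is in the reverse inclusion of part~(1), where you first deduce $\sup \mathsf{L}(x) = \infty$ from Lemma~\ref{lem:factorization of extremal length II}(3) and then cover the progression with downward contraction chains from arbitrarily long factorizations, whereas the paper runs an upward induction applying the expansion identity $\beta_i r^i = (\beta_i - \mathsf{n}(r))r^i + \mathsf{d}(r)r^{i+1}$ step by step.
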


\begin{proof}
	To argue~(1), take $x \in S_r$ such that $|\mathsf{Z}(x)| > 1$. Let $z := \sum_{i=0}^N \alpha_i r^i$ be a factorization in $\mathsf{Z}(x)$ with $|z| > \min \mathsf{L}(x)$. Lemma~\ref{lem:factorization of extremal length II} guarantees that $\alpha_i \ge \mathsf{d}(r)$ for some $i \in \llb 1, N \rrb$. Then one can use the identity $\alpha_i r^i = (\alpha_i - \mathsf{d}(r))r^i + \mathsf{n}(r)r^{i-1}$ to find a factorization $z_1 \in \mathsf{Z}(x)$ with $|z_1| = |z| - (\mathsf{d}(r) - \mathsf{n}(r))$. Carrying out this process as many times as necessary, we can obtain a sequence $z_1, \dots, z_n \in \mathsf{Z}(x)$, where $z_n =: \sum_{i=0}^K \alpha'_i r^i$ satisfies that $\alpha'_i < \mathsf{d}(r)$ for $i \in \llb 1, K \rrb$ and $|z_j| = |z| - j(\mathsf{d}(r) - \mathsf{n}(r))$ for $j \in \llb 1,n \rrb$. By Lemma~\ref{lem:factorization of extremal length II}(1), the factorization $z_n$ has minimum length and, therefore, $|z| \in \{ \min \mathsf{L}(x) + k \big( \mathsf{d}(r) - \mathsf{n}(r)\big) \mid k \in \nn_0 \}$. Then
	\begin{align} \label{eq:sets of lengths are arithmetic sequences 1}
		\mathsf{L}(x) \subseteq \big\{ \min \mathsf{L}(x) + k \big( \mathsf{d}(r) - \mathsf{n}(r)\big) \mid k \in \nn_0 \big\}.
	\end{align}
	For the reverse inclusion, we check inductively that $\min \mathsf{L}(x) + k (\mathsf{d}(r) - \mathsf{n}(r)) \in \mathsf{L}(x)$ for every $k \in \nn_0$. Since $|\mathsf{Z}(x)| > 1$, Lemma~\ref{lem:factorization of extremal length II}(2) guarantees that $|\mathsf{L}(x)| > 1$. Then there exists a factorization of length strictly greater than $\min \mathsf{L}(x)$, and we have already seen that such a factorization can be connected to a minimum-length factorization of $\mathsf{Z}(x)$ by a chain of factorizations in $\mathsf{Z}(x)$ with consecutive lengths differing by $\mathsf{d}(r) - \mathsf{n}(r)$. Therefore $\min \mathsf{L}(x) + (\mathsf{d}(r) - \mathsf{n}(r)) \in \mathsf{L}(x)$. Suppose now that $z = \sum_{i=0}^N \beta_i r^i$ is a factorization in $\mathsf{Z}(x)$ with length $\min \mathsf{L}(x) + k(\mathsf{d}(r) - \mathsf{n}(r))$ for some $k \in \nn$. Then by Lemma~\ref{lem:factorization of extremal length II}(1), there exists $i \in \llb 1,N \rrb$ such that $\beta_i \ge \mathsf{d}(r) > \mathsf{n}(r)$. Now using the identity $\beta_i r^i = (\beta_i - \mathsf{n}(r))r^i + \mathsf{d}(r)r^{i+1}$, one can produce a factorization $z' \in \mathsf{Z}(x)$ such that $|z'| = \min \mathsf{L}(x) + (k+1)(\mathsf{d}(r) - \mathsf{n}(r))$. Hence the reverse inclusion follows by induction.
	
	Clearly, statement~(2) is a direct consequence of the fact that $r \in \nn$ implies that $S_r = (\nn_0,+)$.
	
	To prove~(3), take $x \in S^\bullet_r$. Since $S_r$ is a BF-monoid, there exists $z \in \mathsf{Z}(x)$ such that $|z| = \max \mathsf{L}(x)$. Take $N \in \nn$ and $\alpha_0, \dots, \alpha_N \in \nn_0$ such that $z = \sum_{i=0}^N \alpha_i r^i$. If $\alpha_i \ge \mathsf{n}(r)$ for some $i \in \llb 0, N \rrb$, then we can use the identity $\alpha_i r^i = (\alpha_i - \mathsf{n}(r))r^i + \mathsf{d}(r)r^{i+1}$ to find a factorization $z_1 \in \mathsf{Z}(x)$ such that $|z_1| = |z| - (\mathsf{n}(r) - \mathsf{d}(r))$. Carrying out this process as many times as needed, we will end up with a sequence $z_1, \dots, z_n \in \mathsf{Z}(x)$, where $z_n =: \sum_{i=0}^K \beta_i r^i$ satisfies that $\beta_i < \mathsf{n}(r)$ for $i \in \llb 0, K \rrb$ and $|z_j| = |z| - j(\mathsf{n}(r) - \mathsf{d}(r))$ for $j \in \llb 1, n \rrb$. Lemma~\ref{lem:factorization of extremal length I}(1) ensures that $|z_n| = \min \mathsf{L}(x)$. Then
	\begin{align} \label{eq:sets of lengths are arithmetic sequences 2}
		\bigg\{ \min \mathsf{L}(x) + j( \mathsf{n}(r) - \mathsf{d}(r) ) \ \bigg{|} \ 0 \le j \le \frac{ \max \mathsf{L}(x) - \min \mathsf{L}(x) }{ \mathsf{n}(r) - \mathsf{d}(r) } \bigg\} \subseteq \mathsf{L}(x).
	\end{align}
	On the other hand, we can connect any factorization $w \in \mathsf{Z}(x)$ to the minimum-length factorization $w' \in \mathsf{Z}(x)$ by a chain $w = w_1, \dots, w_t = w'$ of factorizations in $\mathsf{Z}(x)$ so that $|w_i|- |w_{i+1}| = \mathsf{n}(r) - \mathsf{d}(r)$. As a result, both sets involved in the inclusion~(\ref{eq:sets of lengths are arithmetic sequences 2}) are indeed equal.
\end{proof}

We conclude this section collecting some immediate consequences of Theorem~\ref{thm:sets of lengths}.

\begin{cor} \label{bigcor} Take $r \in \qq_{>0}$ such that $S_r$ is atomic.
	\begin{enumerate}
		\item $S_r$ is a BF-monoid if and only if $r \ge 1$.
		\vspace{3pt}
		
		\item If $r \in \nn$, then $S_r \cong \nn_0$ and, as a result, $\Delta(x) = \emptyset$ and $\mathsf{c}(x) = 0$ for all $x \in S_r^\bullet$.
		\vspace{3pt}
		
		\item If $r \notin \nn$, then $\Delta(x) = \{ |\mathsf{n}(r) - \mathsf{d}(r)| \}$ for all $x \in S_r$ such that $|\mathsf{Z}(x)| > 1$. Therefore $\Delta(S_r) = \{ |\mathsf{n}(r) - \mathsf{d}(r)| \}$.
		\vspace{3pt}
		
		\item If $r \notin \nn$, then $\mathsf{Ca}(S_r) = \max \{\mathsf{n}(r), \mathsf{d}(r) \}$. Therefore $\mathsf{c}(S_r) = \max\{\mathsf{n}(r), \mathsf{d}(r)\}$.
	\end{enumerate}
\end{cor}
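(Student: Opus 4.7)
For parts (1)--(3), my plan is to read off the conclusions directly from Theorem \ref{thm:sets of lengths} and the companion results. For (1), when $r \ge 1$ every element of $S_r^\bullet$ is at least $1$, so $0$ is not a limit point of $S_r^\bullet$ and Proposition \ref{prop:BF sufficient condition} gives the BF-property; conversely, when $r < 1$ the inequalities $\mathsf{d}(r) > \mathsf{n}(r) > 1$ from Theorem \ref{thm:atomic classification of multiplicative cyclic Puiseux monoids} let me take $x := \mathsf{n}(r)$, which admits the two distinct factorizations $\mathsf{n}(r)\cdot 1$ and $\mathsf{d}(r)\cdot r$, whereupon Theorem \ref{thm:sets of lengths}(1) makes $\mathsf{L}(x)$ an infinite arithmetic progression. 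Part (2) is immediate since $r \in \nn$ forces $S_r \subseteq \nn_0$ while $1 \in S_r$ gives the reverse inclusion, so $S_r = \nn_0$ and every factorization is unique. Part (3) is a direct inspection of the explicit arithmetic progressions in Theorem \ref{thm:sets of lengths}(1) and (3), whose common difference is $|\mathsf{n}(r) - \mathsf{d}(r)|$ in each case; the element $x$ above again witnesses that $\Delta(S_r)$ is nonempty.

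For the upper bound in (4), I would observe that the basic \emph{swap} move underlying the proofs of Theorem \ref{thm:sets of lengths} and the preceding lemmas --- exchanging $\mathsf{d}(r)$ copies of $r^{i+1}$ for $\mathsf{n}(r)$ copies of $r^{i}$, or the reverse --- produces two factorizations $z, z'$ whose greatest common divisor keeps every atom untouched except those directly involved in the swap. A direct computation then gives $|z| - |\gcd(z,z')|$ and $|z'| - |\gcd(z,z')|$ equal to $\mathsf{d}(r)$ and $\mathsf{n}(r)$ in some order, so $\mathsf{d}(z,z') = \max\{\mathsf{n}(r), \mathsf{d}(r)\}$. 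Since the proofs of Theorem \ref{thm:sets of lengths} already furnish chains of single swap moves connecting any factorization of $x$ to the unique minimum-length factorization $z^\ast$, any two factorizations of $x$ can be joined through $z^\ast$ with all steps of distance at most $\max\{\mathsf{n}(r), \mathsf{d}(r)\}$. This yields $\mathsf{c}(x) \le \max\{\mathsf{n}(r), \mathsf{d}(r)\}$ for every $x$ with $|\mathsf{Z}(x)| > 1$.

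The main obstacle is the matching lower bound, which I would establish via an \emph{isolation} statement: every factorization $z \ne z^\ast$ of $x$ satisfies $\mathsf{d}(z^\ast, z) \ge \max\{\mathsf{n}(r), \mathsf{d}(r)\}$. Writing $z^\ast = \sum \alpha_i r^i$ and $z = \sum \beta_i r^i$, take $m$ to be the largest index of disagreement when $r < 1$ and the smallest such index when $r > 1$, then repeat the valuation argument from Lemma \ref{lem:factorization of extremal length II} and Lemma \ref{lem:factorization of extremal length I}: clearing denominators in the identity $(\alpha_m - \beta_m) r^m = \sum_{i \ne m}(\beta_i - \alpha_i) r^i$ yields $\mathsf{d}(r) \mid \alpha_m - \beta_m$ when $r < 1$ and $\mathsf{n}(r) \mid \alpha_m - \beta_m$ when $r > 1$. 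The one-sided minimum-length bounds $\alpha_m < \mathsf{d}(r)$ or $\alpha_m < \mathsf{n}(r)$ from Lemma \ref{lem:factorization of extremal length II}(1) and Lemma \ref{lem:factorization of extremal length I}(1) then force $\beta_m - \alpha_m \ge \max\{\mathsf{n}(r), \mathsf{d}(r)\}$ (and in particular $\beta_m > \alpha_m$), so $|z| - |\gcd(z^\ast, z)| \ge \beta_m - \alpha_m \ge \max\{\mathsf{n}(r), \mathsf{d}(r)\}$. Consequently, every chain of factorizations starting at $z^\ast$ must take a first step of this length, giving $\mathsf{c}(x) \ge \max\{\mathsf{n}(r), \mathsf{d}(r)\}$. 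Combined with the upper bound, this yields $\mathsf{c}(x) = \max\{\mathsf{n}(r), \mathsf{d}(r)\}$ for every $x$ with $|\mathsf{Z}(x)| > 1$, whence $\mathsf{Ca}(S_r) = \{\max\{\mathsf{n}(r), \mathsf{d}(r)\}\}$ and $\mathsf{c}(S_r) = \max\{\mathsf{n}(r), \mathsf{d}(r)\}$. The delicate step is the sign argument guaranteeing $\beta_m > \alpha_m$, rather than merely $|\beta_m - \alpha_m|$ being large, which is precisely where the one-sided bounds on $z^\ast$ become essential.
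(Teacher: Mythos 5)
Your proposal is correct. The paper offers no argument for this corollary beyond calling it an ``immediate consequence'' of Theorem~\ref{thm:sets of lengths}, and your treatment of parts (1)--(3) is exactly that intended reading-off (with the small but necessary extra observations that $x = \mathsf{n}(r) = \mathsf{n}(r)\cdot 1 = \mathsf{d}(r)\cdot r$ witnesses non-uniqueness and that Lemmas~\ref{lem:factorization of extremal length II}(2) and~\ref{lem:factorization of extremal length I}(2) convert $|\mathsf{Z}(x)|>1$ into $|\mathsf{L}(x)|>1$). Part (4) is the one place where the corollary is genuinely not immediate, and you supply the missing substance correctly: the computation $\mathsf{d}(z,z') = \max\{\mathsf{n}(r),\mathsf{d}(r)\}$ for a single swap, the chains through the unique minimum-length factorization $z^\ast$ from the proofs of Theorem~\ref{thm:sets of lengths} for the upper bound, and the isolation statement $\mathsf{d}(z^\ast,z)\ge\max\{\mathsf{n}(r),\mathsf{d}(r)\}$ for the lower bound --- where, as you note, the one-sided bounds $\alpha_m<\mathsf{d}(r)$ (resp.\ $\alpha_m<\mathsf{n}(r)$) on $z^\ast$ are precisely what forces $\beta_m-\alpha_m\ge\max\{\mathsf{n}(r),\mathsf{d}(r)\}$ rather than merely a large absolute difference, so that every chain leaving $z^\ast$ must take a long first step.
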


\begin{remark}
	Note that Corollary~\ref{bigcor}(4) contrasts with \cite[Theorem~4.2]{OP18} and \cite[Proposition~4.3.1]{GZ19}, where it is proved that most subsets of $\nn_0$ can be realized as the set of catenary degrees of a numerical monoid and a Krull monoid (finitely generated with finite class group), respectively.
\end{remark}
\section{The Elasticity}
\label{sec:elasticity}

\subsection{The Elasticity} An important factorization invariant related with the sets of lengths of an atomic monoid is the elasticity. Let $M$ be a reduced atomic monoid. The \emph{elasticity} of an element $x \in M^\bullet$, denoted by $\rho(x)$, is defined as
\[
	\rho(x) = \frac{\sup \mathsf{L}(x)}{\inf \mathsf{L}(x)}.
\]
By definition, $\rho(0) = 1$. Note that $\rho(x) \in \qq_{\ge 1} \cup \{\infty\}$ for all $x \in M^\bullet$. On the other hand, the \emph{elasticity} of the whole monoid $M$ is defined to be
\[
	\rho(M) := \sup \{\rho(x) \mid x \in M^\bullet\}.
\]
The elasticity was introduced by R.~Valenza~\cite{V90} as a tool to measure the phenomenon of non-unique factorizations in the context of algebraic number theory. The elasticity of numerical monoids has been successfully studied in~\cite{CHM}. In addition, the elasticity of atomic monoids naturally generalizing numerical monoids has received substantial attention in the literature in recent years (see, for instance,~\cite{fG19c,GO17,mG19,qZ18}). In this section we focus on aspects of the elasticity of cyclic rational semirings, sharpening for them some of the results established in~\cite{GO17} and~\cite{mG19}.

The following formula for the elasticity of an atomic Puiseux monoid in terms of the infimum and supremum of its set of atoms was established in~\cite{GO17}.

\begin{theorem} \cite[Theorem~3.2]{GO17} \label{thm:elasticity of PM}
	Let $M$ be an atomic Puiseux monoid. If $0$ is a limit point of $M^\bullet$, then
	$\rho(M) = \infty$. Otherwise,
	\[
		\rho(M) = \frac{\sup \mathcal{A}(M)}{\inf \mathcal{A}(M)}.
	\]
\end{theorem}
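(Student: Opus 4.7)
The plan is to treat the two cases of the theorem separately, relying throughout on one simple observation: if $a, b \in \mathcal{A}(M)$ are atoms with $a > b$ and we write $a/b = p/q$ in lowest terms, then the element $qa = pb$ of $M$ admits an obvious factorization into $q$ copies of $a$ and another into $p$ copies of $b$, so $\{q, p\} \subseteq \mathsf{L}(qa)$ and consequently $\rho(qa) \ge p/q = a/b$. This single device will deliver the lower bounds in both cases.

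Suppose first that $0$ is not a limit point of $M^\bullet$, so that $\alpha := \inf \mathcal{A}(M)$ is strictly positive; set $\beta := \sup \mathcal{A}(M) \in \rr_{>0} \cup \{\infty\}$. For the upper bound $\rho(M) \le \beta/\alpha$, any factorization $z = a_1 + \cdots + a_n \in \mathsf{Z}(x)$ satisfies the trivial squeeze $n \alpha \le x \le n\beta$, which gives $x/\beta \le \inf \mathsf{L}(x)$ and $\sup \mathsf{L}(x) \le x/\alpha$, and hence $\rho(x) \le \beta/\alpha$. For the matching lower bound, I would pick sequences $a_n, b_n \in \mathcal{A}(M)$ with $a_n \to \beta$ and $b_n \to \alpha$ (using $a_n \to \infty$ when $\beta = \infty$). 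Applying the opening observation to each pair $(a_n, b_n)$ with $a_n > b_n$ yields $\rho(M) \ge a_n/b_n$, and letting $n \to \infty$ gives $\rho(M) \ge \beta/\alpha$.

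Now suppose $0$ is a limit point of $M^\bullet$. I would first argue that $0$ must also be a limit point of $\mathcal{A}(M)$ itself: any factorization of a small positive $x \in M^\bullet$ is a sum of positive atoms, each necessarily at most $x$, so arbitrarily small positive elements of $M$ force arbitrarily small atoms. Fixing any atom $a$ and selecting atoms $b_n \in \mathcal{A}(M)$ with $b_n \to 0$, the same observation furnishes elements $q_n a \in M$ with $\rho(q_n a) \ge a/b_n \to \infty$, so $\rho(M) = \infty$, as claimed.

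The only real subtlety is the initial observation: once one notices that cross-multiplying the rational identity $a/b = p/q$ produces a single element of $M$ carrying two factorizations whose lengths realize the ratio $a/b$, everything else reduces to routine estimates, and no hypothesis beyond atomicity of $M$ is used.
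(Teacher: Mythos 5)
The paper does not prove this statement; it is quoted verbatim from \cite[Theorem~3.2]{GO17}, so there is no internal proof to compare against. Your argument is correct and essentially reproduces the standard proof of that result: the cross-multiplication device (for atoms $a>b$ with $a/b=p/q$ in lowest terms, the element $qa=pb$ has lengths $q$ and $p$, so $\rho(qa)\ge a/b$) supplies all lower bounds, the squeeze $n\inf\mathcal{A}(M)\le x\le n\sup\mathcal{A}(M)$ supplies the upper bound when $\inf\mathcal{A}(M)>0$, and atomicity correctly converts ``$0$ is a limit point of $M^\bullet$'' into ``$0$ is a limit point of $\mathcal{A}(M)$'' since every atom occurring in a factorization of $x$ is at most $x$. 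The only degenerate cases (a single atom, or $a_n=b_n$) are harmless, as then the claimed ratio is $1$ and the bound is trivial.
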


The next result is an immediate consequence of Theorem~\ref{thm:elasticity of PM}.

\begin{cor} \label{cor:elasticity of rational semirings}
	Take $r \in \qq_{>0}$ such that $S_r$ is atomic. Then the following statements are equivalent:
	\begin{enumerate}
		\item[(1)] $r \in \nn$;
		\vspace{3pt}
		
		\item[(2)] $\rho(S_r) = 1$;
		\vspace{3pt}
		
		\item[(3)] $\rho(S_r)<\infty$.
	\end{enumerate}
	Hence, if $S_r$ is atomic, then either $\rho(S_r)=1$ or $\rho(S_r)=\infty$.
\end{cor}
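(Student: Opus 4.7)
The plan is to establish the cyclic chain of implications $(1) \Rightarrow (2) \Rightarrow (3) \Rightarrow (1)$. The implication $(2) \Rightarrow (3)$ is immediate from $1 < \infty$, and the remaining work is just feeding two previously established results into each case: Theorem~\ref{thm:elasticity of PM} (which either returns $\infty$ or the ratio $\sup\mathcal{A}/\inf\mathcal{A}$) and the atomic classification of Theorem~\ref{thm:atomic classification of multiplicative cyclic Puiseux monoids} (which tells me exactly what $\mathcal{A}(S_r)$ is).

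For $(1) \Rightarrow (2)$ I would observe that $r \in \nn$ means $\mathsf{d}(r) = 1$, hence every $r^n$ lies in $\nn_0$; since $1 = r^0 \in S_r$, in fact $S_r = \nn_0$. By Theorem~\ref{thm:atomic classification of multiplicative cyclic Puiseux monoids}(1) we have $\mathcal{A}(S_r) = \{1\}$, so $\rho(S_r) = 1$ falls out either from the uniqueness of factorizations in $\nn_0$ or directly from Theorem~\ref{thm:elasticity of PM} with $\sup \mathcal{A}(S_r) = \inf \mathcal{A}(S_r) = 1$.

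The real content is $(3) \Rightarrow (1)$, which I would prove by contrapositive. Assume $r \notin \nn$, i.e., $\mathsf{d}(r) > 1$. Since $S_r$ is atomic, case~(2) of Theorem~\ref{thm:atomic classification of multiplicative cyclic Puiseux monoids} is ruled out, so case~(3) applies, giving $\mathsf{n}(r) > 1$ and $\mathcal{A}(S_r) = \{r^n \mid n \in \nn_0\}$. I would then split on the size of $r$. If $0 < r < 1$, then $r^n \to 0$, so $0$ is a limit point of $S_r^\bullet$, and Theorem~\ref{thm:elasticity of PM} yields $\rho(S_r) = \infty$. If $r > 1$, then $0$ is not a limit point of $S_r^\bullet$, but $\inf \mathcal{A}(S_r) = r^0 = 1$ and $\sup \mathcal{A}(S_r) = \infty$, so the formula in Theorem~\ref{thm:elasticity of PM} once again gives $\rho(S_r) = \infty$. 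In either subcase (3) fails, closing the cycle.

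The trailing sentence of the corollary is then a reformulation: since the three conditions are equivalent, the elasticity of an atomic $S_r$ is $1$ exactly when $r \in \nn$ and $\infty$ otherwise, with no finite value in between. The whole argument is essentially bookkeeping; the only mild subtlety is remembering that atomicity together with $r \notin \nn$ forces $\mathsf{n}(r) > 1$, so that case~(3) of Theorem~\ref{thm:atomic classification of multiplicative cyclic Puiseux monoids} really does apply and the full geometric sequence $\{r^n\}$ sits inside $\mathcal{A}(S_r)$, which is exactly what makes $\sup \mathcal{A}(S_r) / \inf \mathcal{A}(S_r)$ blow up (or $0$ become a limit point) in the two subcases.
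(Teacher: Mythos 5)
Your proof is correct and follows essentially the same route as the paper's: $(1)\Rightarrow(2)$ via $S_r \cong \nn_0$ being factorial, $(2)\Rightarrow(3)$ trivially, and $(3)\Rightarrow(1)$ by contrapositive, splitting into $r<1$ (where $0$ is a limit point of $S_r^\bullet$) and $r>1$ (where $\sup \mathcal{A}(S_r) = \infty$) and invoking Theorem~\ref{thm:elasticity of PM} in each case. Your explicit remark that atomicity together with $r \notin \nn$ forces $\mathsf{n}(r) > 1$, so that Theorem~\ref{thm:atomic classification of multiplicative cyclic Puiseux monoids}(3) identifies $\mathcal{A}(S_r)$ with the full geometric sequence, is a small piece of bookkeeping the paper leaves implicit.
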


\begin{proof}
	To prove that (1) implies (2), suppose that $r \in \nn$. In this case, $S_r \cong \nn_0$. Since~$\nn_0$ is a factorial monoid, $\rho(S_r) = \rho(\nn_0) = 1$.  Clearly, (2) implies (3). Now assume (3) and that $r \notin \nn$. If $r < 1$, then $0$ is a limit point of $S_r^\bullet$ as $\lim_{n \to \infty} r^n = 0$. Therefore it follows by Theorem~\ref{thm:elasticity of PM} that $\rho(S_r) = \infty$. If $r > 1$, then $\lim_{n \to \infty} r^n = \infty$ and, as a result, $\sup \mathcal{A}(S_r) = \infty$. Then Theorem~\ref{thm:elasticity of PM} ensures that $\rho(S_r) = \infty$. Thus, (3) implies~(1). The final statement now easily follows.
\end{proof}

The elasticity of an atomic monoid $M$ is said to be \emph{accepted} if there exists $x \in M$ such that $\rho(M) = \rho(x)$.

\begin{prop}\label{accepted}
	Take $r \in \qq_{> 0}$ such that $S_r$ is atomic. Then the elasticity of $S_r$ is accepted if and only if $r \in \nn$ or $r < 1$.
\end{prop}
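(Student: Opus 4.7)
The plan is to split the argument into three cases determined by $r$: (i) $r \in \nn$, (ii) $r < 1$ (with $S_r$ atomic), and (iii) $r \in \qq_{>1} \setminus \nn$. Cases (i) and (ii) will produce a witness $x \in S_r$ with $\rho(x) = \rho(S_r)$, while case (iii) will show that $\rho(S_r)$ is infinite but is not attained by any single element.

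In case (i), $S_r \cong \nn_0$ is factorial, so $\rho(S_r) = 1$ and every nonzero element has elasticity $1$; the elasticity is trivially accepted. In case (ii), atomicity together with $r < 1$ forces $\mathsf{n}(r) \ge 2$ by Theorem~\ref{thm:atomic classification of multiplicative cyclic Puiseux monoids}, and Corollary~\ref{cor:elasticity of rational semirings} tells us $\rho(S_r) = \infty$, so I need to exhibit a single element whose elasticity is infinite. The natural candidate is $x := \mathsf{n}(r) \in S_r$, represented by the factorization $z = \mathsf{n}(r)\,r^0$. Since the coefficients of $r^i$ for $i \ge 1$ vanish in $z$, Lemma~\ref{lem:factorization of extremal length II}(1) declares $z$ to be the (unique, by part~(2)) factorization of minimum length. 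Because its coefficient of $r^0$ equals $\mathsf{n}(r)$, Lemma~\ref{lem:factorization of extremal length II}(3) then yields $\sup \mathsf{L}(x) = \infty$. Therefore $\rho(x) = \infty = \rho(S_r)$, and the elasticity is accepted.

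For case (iii), Corollary~\ref{cor:elasticity of rational semirings} again gives $\rho(S_r) = \infty$, and I will show that no element of $S_r^\bullet$ witnesses this. The key observation is that Lemma~\ref{lem:factorization of extremal length I}(3)--(4) guarantees that every $x \in S_r^\bullet$ admits a \emph{unique} factorization of maximum length, so in particular $\max \mathsf{L}(x) < \infty$. Combining this with $\min \mathsf{L}(x) \ge 1$ yields $\rho(x) < \infty$ for every $x$, whence no element can realize the infinite supremum $\rho(S_r)$. This proves non-acceptance in case (iii) and completes the equivalence.

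The only non-routine step is the construction of the witness in case (ii); the main obstacle is recognizing that a single coefficient in the minimum-length factorization exceeding $\mathsf{n}(r)$ already forces $\sup \mathsf{L}(x) = \infty$, which makes $x = \mathsf{n}(r)$ the obvious choice. Case (iii) is essentially a structural observation: for $r > 1$ the geometric growth of the atoms $r^n$ gives each element a finite top length, so an infinite global elasticity cannot arise from any one element.
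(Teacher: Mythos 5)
Your argument is correct and follows essentially the same route as the paper's: the same witness $x = \mathsf{n}(r)$ with infinite set of lengths when $r<1$, the trivial factorial case when $r \in \nn$, and the observation that every element of $S_r$ has finite elasticity when $r \in \qq_{>1}\setminus\nn$ (which you extract from Lemma~\ref{lem:factorization of extremal length I}(4), while the paper invokes the BF-property via Proposition~\ref{prop:BF sufficient condition} directly --- the same underlying fact). The only cosmetic difference is that the paper describes $\mathsf{L}(\mathsf{n}(r))$ explicitly via Theorem~\ref{thm:sets of lengths}(1), whereas you deduce $\sup\mathsf{L}(\mathsf{n}(r))=\infty$ straight from Lemma~\ref{lem:factorization of extremal length II}(3); both are valid.
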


\begin{proof}
	For the direct implication, suppose that $r \in \qq_{> 1} \setminus \nn$. Corollary~\ref{cor:elasticity of rational semirings} ensures that $\rho(S_r) = \infty$. However, as $0$ is not a limit point of $S_r^\bullet$, it follows by Proposition~\ref{prop:BF sufficient condition} that~$S_r$ is a BF-monoid, and, therefore, $\rho(x) < \infty$ for all $x \in S_r$. As a result, $S_r$ cannot have accepted elasticity
	
	For the reverse implication, assume first that $r \in \nn$ and, therefore, that $S_r = \nn_0$. In this case, $S_r$ is a factorial monoid and, as a result, $\rho(S_r) = \rho(1) = 1$. Now suppose that $r < 1$. Then it follows by Corollary~\ref{cor:elasticity of rational semirings} that $\rho(S_r) = \infty$. In addition, for $x = \mathsf{n}(r) \in S_r$ Lemma~\ref{lem:factorization of extremal length II}(1) and Theorem~\ref{thm:atomic classification of multiplicative cyclic Puiseux monoids}(1) guarantee that
	\[
		\mathsf{L}(x) = \big\{ \mathsf{n}(r) + k \big( \mathsf{d}(r) - \mathsf{n}(r) \big) \mid k \in \nn_0 \big\}.
	\]
	Because $\mathsf{L}(x)$ is an infinite set, we have that $\rho(S_r) = \infty = \rho(x)$. Hence $S_r$ has accepted elasticity, which completes the proof.
\end{proof}

\subsection{The Set of Elasticities} For an atomic monoid $M$ the set
\[
	R(M) = \{ \rho(x) \mid x \in M\}
\]
is called the \emph{set of elasticities} of $M$, and $M$ is called \emph{fully elastic} if $R(M) = \qq \cap [1, \rho(M)]$ when $\infty \notin R(M)$ and $R(M) \setminus \{\infty\} = \qq \cap [1, \infty)$ when $\infty \in R(M)$. Let us proceed to describe the sets of elasticities of atomic cyclic rational semirings.

\begin{prop} \label{prop:set of elasticities}
	Take $r \in \qq_{> 0}$ such that $S_r$ is atomic.
	\begin{enumerate}
		\item If $r < 1$, then $R(S_r) = \{1, \infty\}$ and, therefore, $S_r$ is not fully elastic.
		\vspace{3pt}
		
		\item If $r \in \nn$, then $R(S_r) = \{1\}$ and, therefore, $S_r$ is fully elastic.
		\vspace{3pt}
		
		\item If $r \in \qq_{> 0} \setminus \nn$ and $\mathsf{n}(r) = \mathsf{d}(r) + 1$, then $S_r$ is fully elastic, in which case $R(S_r) = \qq_{\ge 1}$.
	\end{enumerate}
\end{prop}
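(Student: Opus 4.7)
The plan is to reduce the claim to realising every rational $p/q \in \qq_{\ge 1}$ as $\rho(x)$ for some $x \in S_r$, and then to produce such an $x$ via an explicit maximum-length factorization. Set $d := \mathsf{d}(r) \ge 2$, so that $\mathsf{n}(r) = d+1$ and $r = (d+1)/d > 1$. By Corollary~\ref{cor:elasticity of rational semirings} we have $\rho(S_r) = \infty$, and by Proposition~\ref{prop:BF sufficient condition} the monoid $S_r$ is BF, so $\rho(x) < \infty$ for every $x \in S_r$ and hence $\infty \notin R(S_r)$. The definition of fully elastic therefore reduces to showing $R(S_r) = \qq \cap [1,\infty) = \qq_{\ge 1}$; the inclusion $R(S_r) \subseteq \qq_{\ge 1}$ is immediate, so only the reverse inclusion carries content. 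By Theorem~\ref{thm:sets of lengths}(3), each $\mathsf{L}(x)$ is a discrete interval $\llb \min \mathsf{L}(x), \max \mathsf{L}(x) \rrb$, so $\rho(x) = \max \mathsf{L}(x)/\min \mathsf{L}(x)$, and the task becomes: given coprime $p \ge q \ge 1$, construct $x \in S_r$ whose extremal lengths have ratio $p/q$. The case $p = q = 1$ is immediate (take $x = 0$), so I assume $p > q$.

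The construction specifies the maximum-length factorization of $x$ directly and then runs the cascade given by the identity $(d+1)r^i = d r^{i+1}$ to read off the minimum-length factorization. Choose $k \in \nn$ with $kq \ge d$ (for instance, $k = \lceil d/q \rceil$), so that $kq - d \ge 0$; the goal will be $\max \mathsf{L}(x) = kp$ and $\min \mathsf{L}(x) = kq$. Let $z = \sum_i \alpha_i r^i$ with $\alpha_0 = d+1$, $\alpha_i = 1$ for $1 \le i \le k(p-q)-1$ (a ``cascade block''), $\alpha_{k(p-q)} = 0$, and $\alpha_{N_j} = 1$ at $kq - d$ further indices $N_j > k(p-q)$ spaced apart so that $\alpha_{N_j - 1} = 0$ for every $j$ (each surviving $1$ is \emph{isolated}); all remaining $\alpha_i$ are zero. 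Since $d \ge 2$, every $\alpha_i$ with $i \ge 1$ lies in $\{0,1\} \subseteq \llb 0, d-1 \rrb$, so Lemma~\ref{lem:factorization of extremal length I}(3) identifies $z$ as the (unique) maximum-length factorization, with length $(d+1) + (k(p-q)-1) + (kq-d) = kp$. Set $x := \phi(z) \in S_r$.

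Applying $(d+1)r^i = d r^{i+1}$ once at level $0$ triggers a chain of up-trades: each up-trade zeroes the current level and adds $d$ to the next, and because each of those next levels initially holds $1$, the new coefficient is $d+1$, which triggers the following up-trade. The chain terminates exactly at level $k(p-q)$, where the original coefficient is $0$ and so the deposited $d$ does not propagate further. After $k(p-q)$ up-trades the factorization has $\alpha_0 = 0$, $\alpha_i = 0$ for $1 \le i < k(p-q)$, $\alpha_{k(p-q)} = d$, and the isolated $1$'s at the $N_j$ untouched; every coefficient is at most $d < d+1$, so Lemma~\ref{lem:factorization of extremal length I}(1) certifies this as the minimum-length factorization, of length $d + (kq - d) = kq$. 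Hence $\rho(x) = kp/kq = p/q$. The main delicacy is the cascade bookkeeping: I must verify that the chain propagates through exactly $k(p-q)$ levels (not fewer, not more), that each isolated $1$ truly cannot participate in any up- or down-trade (since $1 < d$ and $1 < d+1$), and that no further up-trade at level $0$ is available after the cascade (ensured by $\alpha_0 = 0$). Once these routine checks are complete, Theorem~\ref{thm:sets of lengths}(3) fills in the full interval $\mathsf{L}(x) = \llb kq, kp \rrb$, finishing the argument.
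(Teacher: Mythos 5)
Your treatment of part~(3) is correct, and it is essentially the paper's own construction read from the opposite end. The paper fixes the target elasticity, writes down a minimum-length factorization $\mathsf{d}(r)r^k + \sum_{i=1}^{t} r^{k+i}$ and a maximum-length factorization $\mathsf{d}(r)\cdot 1 + \sum_{i=0}^{k-1} r^i + \sum_{i=1}^{t} r^{k+i}$ of the same element, checks that they represent the same element via the closed-form identity $\sum_{i=0}^{k-1} r^i = \mathsf{d}(r)(r^k-1)$ (this is where $\mathsf{n}(r)=\mathsf{d}(r)+1$ enters), and certifies both extremal lengths with Lemma~\ref{lem:factorization of extremal length I}, exactly as you do. Your maximum-length factorization is the paper's with the trailing block of $1$'s spread out rather than consecutive --- the ``isolation'' condition is harmless but unnecessary, since a coefficient equal to $1 < \mathsf{d}(r)$ at a positive level can never trade in either direction regardless of its neighbours --- and your cascade of $k(p-q)$ applications of $(d+1)r^i = d\,r^{i+1}$ is the step-by-step version of the same geometric-series computation. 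The parameters match exactly (your $k(p-q)$ is the paper's $k$, your $kq-d$ is the paper's $t$), and all your bookkeeping checks out.

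The one genuine shortfall is that parts~(1) and~(2) of the proposition are never addressed, and they are part of the statement. Both are short: for~(2), $S_r=(\nn_0,+)$ is factorial, so $R(S_r)=\{1\}=\qq\cap[1,\rho(S_r)]$ and $S_r$ is fully elastic; for~(1), Theorem~\ref{thm:sets of lengths}(1) shows that every $x$ with $|\mathsf{Z}(x)|>1$ has an infinite set of lengths, hence $\rho(x)=\infty$, while $\rho(x)=1$ whenever $|\mathsf{Z}(x)|=1$, so $R(S_r)=\{1,\infty\}$ and $R(S_r)\setminus\{\infty\}=\{1\}\neq\qq\cap[1,\infty)$, whence $S_r$ is not fully elastic. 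Add these two observations and the proof is complete.
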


\begin{proof}
	First, suppose that $r < 1$. Take $x \in S_r$ such that $|\mathsf{Z}(x)| > 1$. It follows by Theorem~\ref{thm:sets of lengths}(1) that $\mathsf{L}(x)$ is an infinite set, which implies that $\rho(x) = \infty$. As a result, $\rho(S_r) = \{1,\infty\}$ and then $S_r$ is not fully elastic.
	
	To argue~(2), it suffices to observe that $r \in \nn$ implies that $S_r = (\nn_0,+)$ is a factorial monoid and, therefore, $\rho(S_r) = \{1\}$.
	
	Finally, let us argue that $S_r$ is fully elastic when $\mathsf{n}(r) = \mathsf{d}(r) + 1$. To do so, fix $q \in \qq_{>1}$. Take $m \in \nn$ such that $m \mathsf{d}(q) > \mathsf{d}(r)$, and set $k = m \big( \mathsf{n}(q) - \mathsf{d}(q) \big)$. Let $t = m \mathsf{d}(q) - \mathsf{d}(r)$, and consider the factorizations $z = \mathsf{d}(r) r^k + \sum_{i=1}^{t} r^{k+i} \in \mathsf{Z}(S_r)$ and $z' = \mathsf{d}(r) \cdot 1 + \sum_{i=0}^{k-1} r^i  + \sum_{i=1}^{t} r^{k+i} \in \mathsf{Z}(S_r)$. Since $\mathsf{n}(r) = \mathsf{d}(r) + 1$, it can be easily checked that $\frac{1}{r-1} = \mathsf{d}(r)$. As
	\[
		\mathsf{d}(r) + \sum_{i=0}^{k-1} r^i + \sum_{i=1}^{t} r^{k+i}= \mathsf{d}(r) + \frac{r^k - 1}{r-1} + \sum_{i=1}^{t} r^{k+i} = \mathsf{d}(r)r^k + \sum_{i=1}^{t} r^{k+i},
	\]
	there exists $x \in S_r$ such that $z,z' \in \mathsf{Z}(x)$. By Lemma~\ref{lem:factorization of extremal length I} it follows that $z$ is a factorization of $x$ of minimum length and $z'$ is a factorization of $x$ of maximum length. Thus, 
	\[
		\rho(x) = \frac{|z'|}{|z|} = \frac{\mathsf{d}(r) + k + t}{\mathsf{d}(r) + t} = \frac{m \, \mathsf{n}(q)}{m \, \mathsf{d}(q)} = q.
	\]
	As $q$ was arbitrarily taken in $\qq_{>1}$, it follows that $R(S_r) = \qq_{\ge 1}$. Hence $S_r$ is fully elastic when $\mathsf{n}(r) = \mathsf{d}(r) + 1$.
\end{proof}
\medskip

We were unable to determine in Proposition~\ref{prop:set of elasticities} whether $S_r$ is fully elastic when $r \in \qq_{> 1} \setminus \nn$ with $\mathsf{n}(r) \neq \mathsf{d}(r) + 1$. However, we prove in Proposition \ref{prop:the set of elasticity of S_r is dense when r > 1} that the set of elasticities of $S_r$ is dense in $\rr_{\ge 1}$.

\begin{prop} \label{prop:the set of elasticity of S_r is dense when r > 1}
	If $r \in \qq_{>1} \setminus \nn$, then the set $R(S_r)$ is dense in $\rr_{\ge 1}$.
\end{prop}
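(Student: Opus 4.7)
The plan is to construct, for each pair $(j, n) \in \nn \times \nn_{\ge b}$ (where $a := \mathsf{n}(r)$ and $b := \mathsf{d}(r)$, so $a > b \ge 2$ as $r \in \qq_{>1} \setminus \nn$), an element $x_{j,n} \in S_r$ whose elasticity admits an explicit closed form, and then tune $(j,n)$ to approximate any target. Since $\rho(1) = 1 \in R(S_r)$, it suffices to prove density in $(1, \infty)$.

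First I would analyze the auxiliary element $y_j := b\,r^j \in S_r$. Lemma~\ref{lem:factorization of extremal length I}(1) gives $\min \mathsf{L}(y_j) = b$, as the displayed factorization has coefficient $b < a$. Iterating the trade $b r^k \to a r^{k-1}$ at $k = j, j-1, \ldots, 1$ produces a chain of factorizations of $y_j$ with strictly increasing lengths $b, a, 2a - b, \ldots, b + j(a - b)$, so $M(j) := \max \mathsf{L}(y_j)$ satisfies $M(j) \to \infty$. The main technical point, which I expect to be the chief obstacle, is to establish that the unique maximum-length factorization $w = \sum_i \beta_i r^i$ of $y_j$ (unique by Lemma~\ref{lem:factorization of extremal length I}(4), with existence guaranteed by the BF property from Proposition~\ref{prop:BF sufficient condition}) satisfies $\beta_i = 0$ for every $i \ge j$. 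The greedy cascade that repeatedly applies $b r^k \to a r^{k-1}$ at the highest position $k \ge 1$ whose current coefficient reaches $b$ should witness this: every such trade is strictly length-increasing and moves copies strictly downward, so position $j$ is emptied on the first trade and never repopulated; the cascade terminates at a factorization whose coefficients at positions $\ge 1$ are all less than $b$, which by Lemma~\ref{lem:factorization of extremal length I}(3) must equal $w$.

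Granting this, for each $j \in \nn$ and each finite set $T \subset \nn_{> j}$, define
\[
	x_{j, T} := y_j + \sum_{i \in T} r^i \in S_r.
\]
The factorization $b r^j + \sum_{i \in T} r^i$ has every coefficient strictly less than $a$, so Lemma~\ref{lem:factorization of extremal length I}(1) yields $\min \mathsf{L}(x_{j, T}) = b + |T|$. Adding $\sum_{i \in T} r^i$ to $w$ produces a factorization of $x_{j, T}$ whose coefficient at each $i \ge 1$ equals $\beta_i + [i \in T] \in \{0, 1, \ldots, b-1\}$ (using $\beta_i = 0$ for $i \in T \subset \nn_{>j}$ and $b \ge 2$), so by Lemma~\ref{lem:factorization of extremal length I}(3) it is the maximum-length factorization, and $\max \mathsf{L}(x_{j, T}) = M(j) + |T|$. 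Consequently,
\[
	\rho(x_{j, T}) = 1 + \frac{M(j) - b}{b + |T|},
\]
with the denominator realizing every integer $\ge b$ as $|T|$ varies over $\nn_0$.

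To finish, fix any $\rho^* > 1$ and $\varepsilon > 0$. Since $M(j) \to \infty$, choose $j$ large enough that $n := \lfloor (M(j) - b)/(\rho^* - 1) \rfloor$ satisfies both $n \ge b$ and $n > (\rho^* - 1)/\varepsilon$; setting $|T| := n - b$ and picking any $T \subset \nn_{>j}$ of that size gives
\[
	\bigl| \rho(x_{j, T}) - \rho^* \bigr| = \Bigl| \tfrac{M(j) - b}{n} - (\rho^* - 1) \Bigr| < \tfrac{\rho^* - 1}{n} < \varepsilon,
\]
which establishes the density of $R(S_r)$ in $\rr_{\ge 1}$.
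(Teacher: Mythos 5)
Your proof is correct, but it takes a genuinely different route from the paper's. The paper argues abstractly: since $\rho(S_r)=\infty$ and $S_r$ is a BF-monoid, there is a sequence $\{x_n\}$ with $\rho(x_n)\to\infty$, and it then invokes \cite[Lemma~5.6]{GO17} to get that $\big\{\frac{\mathsf{n}(\rho(x_n))+k}{\mathsf{d}(\rho(x_n))+k} \mid n,k\in\nn\big\}$ is dense in $\rr_{\ge 1}$; the elasticities in this set are then realized by padding $x_n$ with $K$ distinct atoms $r^{m+1},\dots,r^{m+K}$ lying above the largest atom dividing $x_n$, which shifts both $\min\mathsf{L}$ and $\max\mathsf{L}$ by $K$. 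You instead build everything by hand: the explicit elements $\mathsf{d}(r)\,r^j+\sum_{i\in T}r^i$, a direct verification via Lemma~\ref{lem:factorization of extremal length I} that their elasticity is $1+\frac{M(j)-\mathsf{d}(r)}{\mathsf{d}(r)+|T|}$, and an elementary floor-function estimate in place of the cited density lemma. The underlying mechanism is the same in both proofs --- pad an element of large elasticity with high-power atoms so that both extreme lengths shift by the same amount --- but your version is self-contained and exhibits an explicit dense family, at the cost of the extra cascade argument showing that the maximum-length factorization of $\mathsf{d}(r)\,r^j$ is supported on positions $<j$ (the paper sidesteps this by choosing the padding exponents above the largest atom dividing $x_n$, so that \emph{every} factorization of $x_n$ automatically avoids them; you could simplify your argument the same way). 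All the individual steps check out: the downward-only trades justify the support claim, $M(j)\ge \mathsf{d}(r)+j(\mathsf{n}(r)-\mathsf{d}(r))\to\infty$, and the coefficient bounds $1<\mathsf{d}(r)<\mathsf{n}(r)$ make Lemma~\ref{lem:factorization of extremal length I}(1) and (3) applicable to the padded factorizations.
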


\begin{proof}
	Since $\sup \mathcal{A}(S_r) = \infty$, it follows by Theorem~\ref{thm:elasticity of PM} that $\rho(S_r) = \infty$. This, along with the fact that $S_r$ is a BF-monoid (because of Proposition~\ref{prop:BF sufficient condition}), ensures the existence of a sequence $\{x_n\}$ of elements of $S_r$ such that $\lim_{n \to \infty} \rho(x_n) = \infty$. Then it follows by~\cite[Lemma~5.6]{GO17} that the set
	\[
		S := \bigg\{ \frac{\mathsf{n}(\rho(x_n)) + k}{\mathsf{d}(\rho(x_n)) + k} \ \bigg{|} \ n,k \in \nn \bigg\}
	\]
	is dense in $\rr_{\ge 1}$. Fix $n,k \in \nn$. Take $m \in \nn$ such that $r^m$ is the largest atom dividing $x_n$ in $S_r$. Now take $K := k \gcd(\min \mathsf{L}(x_n), \max \mathsf{L}(x_n))$. Consider the element $y_{n,k} := x_n + \sum_{i=1}^K r^{m + i} \in S_r$. It follows by Lemma~\ref{lem:factorization of extremal length I} that $x_n$ has a unique minimum-length factorization and a unique maximum-length factorization; let them be $z_0$ and $z_1$, respectively. Now consider the factorizations $w_0 := z_0 + \sum_{i=1}^K r^{m + i} \in \mathsf{Z}(y_{n,k})$ and $w_1 := z_1 + \sum_{i=1}^K r^{m + i} \in \mathsf{Z}(y_{n,k})$. Once again, we can appeal to Lemma~\ref{lem:factorization of extremal length I} to ensure that $w_0$ and $w_1$ are the minimum-length and maximum-length factorizations of $y_{n,k}$. Therefore $\min \mathsf{L}(y_{n,k}) = \min \mathsf{L}(x_n) + K$ and $\max \mathsf{L}(y_{n,k}) = \max \mathsf{L}(x_n) + K$. Then we have
	\[
		\rho(y_{n,k}) = \frac{\max \mathsf{L}(y_{n,k})}{\min \mathsf{L}(y_{n,k})} = \frac{\max \mathsf{L}(x_n) + K}{\min \mathsf{L}(x_n) + K} = \frac{\mathsf{n}(\rho(x_n)) + k}{\mathsf{d}(\rho(x_n)) + k}.
	\]
	Since $n$ and $k$ were arbitrarily taken, it follows that $S$ is contained in $R(S_r)$. As $S$ is dense in $\rr_{\ge 1}$ so is $R(S_r)$, which concludes our proof.
\end{proof}

\begin{cor}
	The set of elasticities of $S_r$ is dense in $\rr_{\ge 1}$ if and only if $r \in \qq_{> 1} \setminus \nn$.
\end{cor}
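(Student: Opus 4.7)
The plan is to prove each direction separately, leveraging the results already established in this section. The reverse implication is immediate: if $r \in \qq_{>1} \setminus \nn$, then Proposition~\ref{prop:the set of elasticity of S_r is dense when r > 1} already gives that $R(S_r)$ is dense in $\rr_{\ge 1}$, so nothing further needs to be done here.

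For the forward implication I would argue the contrapositive, assuming $r \in \qq_{>0}$ and $r \notin \qq_{>1} \setminus \nn$, and then showing $R(S_r)$ fails to be dense in $\rr_{\ge 1}$. I would partition into cases according to Theorem~\ref{thm:atomic classification of multiplicative cyclic Puiseux monoids}. First, if $r \in \nn$, then $S_r \cong (\nn_0,+)$ and Proposition~\ref{prop:set of elasticities}(2) gives $R(S_r) = \{1\}$, which is certainly not dense in $\rr_{\ge 1}$. Second, if $r \in (0,1) \cap \qq$ with $\mathsf{n}(r) > 1$, then $S_r$ is atomic and Proposition~\ref{prop:set of elasticities}(1) gives $R(S_r) = \{1, \infty\}$, again not dense in $\rr_{\ge 1}$.

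The only remaining case is $r \in (0,1) \cap \qq$ with $\mathsf{n}(r) = 1$, in which Theorem~\ref{thm:atomic classification of multiplicative cyclic Puiseux monoids}(2) declares $S_r$ to be antimatter. Since $\mathcal{A}(S_r) = \emptyset$ in this case, no nonzero element of $S_r$ admits a factorization, so $R(S_r)$ is either empty or the singleton $\{1\}$ (for $\rho(0) = 1$); in either reading it is not dense in $\rr_{\ge 1}$. Combining the three cases exhausts all $r \in \qq_{>0}$ outside $\qq_{>1} \setminus \nn$ and completes the contrapositive.

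I do not anticipate any real obstacle here: all the substantive work has already been done in Propositions~\ref{prop:set of elasticities} and~\ref{prop:the set of elasticity of S_r is dense when r > 1}, and this corollary merely packages them together by performing a case analysis on $r$. The only minor care needed is to acknowledge the antimatter case, where the set $R(S_r)$ is trivial simply because $S_r$ has no atoms at all.
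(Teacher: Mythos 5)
Your proposal is correct and follows exactly the route the paper intends: the corollary is stated as an immediate consequence of Proposition~\ref{prop:set of elasticities} (which gives $R(S_r)=\{1\}$ for $r\in\nn$ and $R(S_r)=\{1,\infty\}$ for $r<1$) together with Proposition~\ref{prop:the set of elasticity of S_r is dense when r > 1}. Your explicit handling of the antimatter case $\mathsf{n}(r)=1$, $r<1$ is a reasonable extra precaution, but otherwise the argument matches the paper's.
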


\begin{remark}
	Proposition~\ref{prop:the set of elasticity of S_r is dense when r > 1} contrasts with the fact that the elasticity of a numerical monoid is always nowhere dense in $\rr$~\cite[Corollary~2.3]{CHM}.
\end{remark}

Wishing to have a full picture of the sets of elasticities of cyclic rational semirings, we propose the following conjecture.

\begin{conj}
	For $r \in \qq_{>1} \setminus \nn$ such that $\mathsf{n}(r) > \mathsf{d}(r) + 1$, the monoid $S_r$ is fully elastic.
\end{conj}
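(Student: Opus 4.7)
The plan is to upgrade Proposition~\ref{prop:the set of elasticity of S_r is dense when r > 1} from density to surjectivity, showing $R(S_r) = \qq \cap [1, \infty)$. Fix a target $q = u/v \in \qq_{>1}$ in lowest terms. The first step is the ``shift'' trick implicit in the proof of Proposition~\ref{prop:the set of elasticity of S_r is dense when r > 1}: if $y \in S_r^{\bullet}$ has all of its factorizations supported on $r^0, r^1, \ldots, r^L$ and $K \ge L$, then Lemma~\ref{lem:factorization of extremal length I} implies that $\min \mathsf{L}\!\left(y + \sum_{i=1}^t r^{K+i}\right) = \min \mathsf{L}(y) + t$ and $\max \mathsf{L}\!\left(y + \sum_{i=1}^t r^{K+i}\right) = \max \mathsf{L}(y) + t$ for every $t \in \nn_0$. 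Writing $M = \min \mathsf{L}(y)$, $N = \max \mathsf{L}(y)$, and $K(y) = (N - M)/(\mathsf{n}(r) - \mathsf{d}(r))$ (a nonnegative integer by Theorem~\ref{thm:sets of lengths}), the equation $\rho\!\left(y + \sum_{i=1}^t r^{K+i}\right) = u/v$ becomes $t = (vN - uM)/(u - v)$, which is a valid nonnegative integer exactly when $\rho(y) \ge q$ and, after using $\gcd(u - v, v) = 1$, $(u - v)/d \mid K(y)$, where $d = \gcd(u - v,\ \mathsf{n}(r) - \mathsf{d}(r))$.

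So the conjecture reduces to: for every positive integer $K_0$ and every $q \in \qq_{>1}$, produce $y \in S_r^{\bullet}$ with $K(y) \equiv 0 \pmod{K_0}$ and $\rho(y) \ge q$. In the subcase $\mathsf{n}(r) < 2\mathsf{d}(r)$, this can be handled directly. The identity $\mathsf{d}(r)\, r^k = \mathsf{n}(r) + (\mathsf{n}(r) - \mathsf{d}(r))(r + r^2 + \cdots + r^{k-1})$, which one checks using $(r-1)\mathsf{d}(r) = \mathsf{n}(r) - \mathsf{d}(r)$, exhibits a max-length factorization on its right side because each interior coefficient $\mathsf{n}(r) - \mathsf{d}(r)$ is strictly less than $\mathsf{d}(r)$ by hypothesis. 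Running the computation from the proof of Proposition~\ref{prop:set of elasticities}(3) with $x = \mathsf{d}(r) r^k + \sum_{i=1}^t r^{k+i}$ yields $K(x) = k$ and elasticity $(\mathsf{d}(r) + k(\mathsf{n}(r) - \mathsf{d}(r)) + t)/(\mathsf{d}(r) + t)$; choosing $k$ to be a sufficiently large multiple of $(u-v)/d$ and solving for $t$ realizes $\rho = q$ on the nose.

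The main obstacle is the opposite regime $\mathsf{n}(r) \ge 2\mathsf{d}(r)$ (equivalently $r \ge 2$), where the right-hand side of the above identity is no longer a max-length factorization, and instead initiates a cascade of further expand-down rewrites $\mathsf{d}(r) r^i \rightsquigarrow \mathsf{n}(r) r^{i-1}$ whose combinatorics admit no simple closed form. My plan here is to work with the family $y = n \in \nn \subset S_r$. Because the factorization $n \cdot 1$ already has every higher coefficient equal to $0 < \mathsf{d}(r)$, Lemma~\ref{lem:factorization of extremal length I}(3) gives $\max \mathsf{L}(n) = n$, and so $\rho(n) = n / \min \mathsf{L}(n)$, while $\min \mathsf{L}(n)$ is the ``digit sum'' output by iterating the compressing rewrite $\mathsf{n}(r) r^i \rightsquigarrow \mathsf{d}(r) r^{i+1}$ starting from $n \cdot 1$. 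Taking $n = \mathsf{n}(r)^{\ell}$ makes $\rho(n)$ as large as desired, so the condition $\rho(y) \ge q$ is cheap; the real work is to prove that $\{K(n) \bmod K_0 : n \in \nn\}$ hits every residue infinitely often. Exploiting $\gcd(\mathsf{n}(r), \mathsf{d}(r)) = 1$, one expects the carry dynamics of this greedy digit algorithm to sample every residue class modulo any fixed $K_0$ by a pigeonhole or equidistribution argument, but making this precise seems to be the genuine content of the conjecture and where a successful proof would do most of its work.
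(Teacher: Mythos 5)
First, note that the statement you are addressing appears in the paper only as a conjecture---the authors explicitly say they were unable to settle the case $\mathsf{n}(r) \neq \mathsf{d}(r)+1$---so there is no proof of record to compare against and your attempt must stand on its own. Its first two paragraphs do stand: the shift reduction is correct (appending $\sum_{i=1}^t r^{K+i}$ above the support of the extremal factorizations of $y$ translates both $\min \mathsf{L}$ and $\max \mathsf{L}$ by $t$, by Lemma~\ref{lem:factorization of extremal length I}, and solving $\rho = u/v$ for $t$ does reduce the problem to finding $y$ with $\rho(y) \ge q$ and $(u-v)/d \mid K(y)$), and in the regime $\mathsf{d}(r) < \mathsf{n}(r) < 2\mathsf{d}(r)$ your identity $\mathsf{d}(r)r^k = \mathsf{n}(r) + (\mathsf{n}(r)-\mathsf{d}(r))(r + \cdots + r^{k-1})$ really does exhibit the maximum-length factorization (all interior coefficients are $< \mathsf{d}(r)$), giving $K = k$ freely prescribable and letting the computation from Proposition~\ref{prop:set of elasticities}(3) go through verbatim. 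That settles the conjecture for $1 < r < 2$, which is a genuine partial advance beyond Proposition~\ref{prop:the set of elasticity of S_r is dense when r > 1}.

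The gap is the case $\mathsf{n}(r) \ge 2\mathsf{d}(r)$ (i.e.\ $r > 2$), and you acknowledge it yourself: the claim that the carry dynamics of the greedy rewrite $\mathsf{n}(r)r^i \rightsquigarrow \mathsf{d}(r)r^{i+1}$ applied to $n \cdot 1$ ``sample every residue class modulo $K_0$'' is asserted, not proved, and it is precisely the hard combinatorial core of the problem. Moreover, equidistribution of $K(n) \bmod K_0$ over all $n \in \nn$ would not by itself suffice: you need the residue $0$ to be attained \emph{simultaneously} with $\rho(n) \ge q$, i.e.\ within the sparse set of $n$ satisfying $\min \mathsf{L}(n) \le n/q$ (your candidates $n = \mathsf{n}(r)^\ell$), and nothing is established about $K(\mathsf{n}(r)^\ell) \bmod K_0$. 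The natural fallback of summing widely separated shifted copies of a fixed small element also does not obviously rescue the argument here, since for $r > 2$ the maximum-length factorization of $\mathsf{n}(r)r^B$ cascades in support all the way down to $r^0$, so the supports of the copies cannot be decoupled and $K$ need not be additive. As written, then, the proposal proves the conjecture for $1 < r < 2$ and leaves it open for $r > 2$.
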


\subsection{Local Elasticities and Unions of Sets of Lengths} For a nontrivial reduced monoid $M$ and $k \in \nn$, we let $\mathcal{U}_k(M)$ denote the union of sets of lengths containing $k$, that is, $\mathcal{U}_k(M)$ is the set of $\ell \in \nn$ for which there exist atoms $a_1, \dots, a_k, b_1, \dots, b_\ell$ such that $a_1 \dots a_k = b_1 \dots b_\ell$. The set $\mathcal{U}_k(M)$ is known as the \emph{union of sets of lengths} of $M$ containing~$k$. In addition, we set
\[
	 \lambda_k(M) := \min \, \mathcal{U}_k(M) \quad \text{and} \quad  \rho_k(M) := \sup \, \mathcal{U}_k(M),
\]
and we call $\rho_k(M)$ the \emph{k-th local elasticity} of $M$.
Unions of sets of lengths have received a great deal of attention in recent literature; see, for example, \cite{BS18,BGG11,FGKT17,sT19}. In particular, the unions of sets of lengths and the local elasticities of Puiseux monoids have been considered in~\cite{mG19}. By~\cite[Section~1.4]{GH06}, the elasticity of an atomic monoid can be expressed in terms of its local elasticities as follows
\[
	\rho(M) = \sup \bigg\{ \frac{\rho_k(M)}{k} \ \bigg{|} \ k \in \nn \bigg\} = \lim_{k \to \infty} \frac{\rho_k (M)}{k}.
\]

Let us conclude this section studying the unions of sets of lengths and the local elasticities of atomic cyclic rational semirings.

\begin{prop}\label{local}
	Take $r \in \qq_{> 0}$ such that $S_r$ is atomic. Then $\mathcal{U}_k(S_r)$ is an arithmetic progression containing $k$ with distance $|\mathsf{n}(r) - \mathsf{d}(r)|$ for every $k \in \nn$. More specifically, the following statements hold.
	\begin{enumerate}
		\item If $r < 1$, then
		\begin{itemize}
			\item $\mathcal{U}_k(S_r) = \{k\}$ if $k < \mathsf{n}(r)$,
			\vspace{2pt}
			\item $\mathcal{U}_k(S_r) = \{ k + j (\mathsf{d}(r) - \mathsf{n}(r)) \mid j \in \nn_0 \}$ if $\mathsf{n}(r) \le k < \mathsf{d}(r)$, and
			\vspace{2pt}
			\item $\mathcal{U}_k(S_r) = \{ k + j (\mathsf{d}(r) - \mathsf{n}(r)) \mid j \in \zz_{\ge \ell} \}$ for some $\ell \in \zz_{< 0}$ if $k \ge \mathsf{d}(r)$.
		\end{itemize}
		\vspace{3pt}
		
		\item If $r \in \qq_{> 1} \setminus \nn$, then
		\begin{itemize}
			\item $\mathcal{U}_k(S_r) = \{k\}$ if $k < \mathsf{d}(r)$,
			\vspace{2pt}
			\item $\mathcal{U}_k(S_r) = \{ k + j (\mathsf{n}(r) - \mathsf{d}(r)) \mid j \in \nn_0 \}$ if $\mathsf{d}(r) \le k < \mathsf{n}(r)$, and
			\vspace{2pt}
			\item $\mathcal{U}_k(S_r) = \{ k + j (\mathsf{n}(r) - \mathsf{d}(r)) \mid j \in \zz_{\ge \ell} \}$ for some $\ell \in \zz_{< 0}$ if $k \ge \mathsf{n}(r)$.
		\end{itemize}
		\vspace{3pt}
		
		\item If $r \in \nn$, then $\mathcal{U}_k(S_r) = \{k\}$ for every $k \in \nn$.
	\end{enumerate}
\end{prop}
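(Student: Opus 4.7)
The plan is to compute
\[
	\mathcal{U}_k(S_r) = \bigcup \{\mathsf{L}(x) : x \in S_r, \ k \in \mathsf{L}(x)\}
\]
by first constraining the possible values of $\min\mathsf{L}(x)$ among witnesses and then invoking Theorem~\ref{thm:sets of lengths} to describe each $\mathsf{L}(x)$. Write $n := \mathsf{n}(r)$, $d := \mathsf{d}(r)$, and $\delta := |n - d|$. Since Theorem~\ref{thm:sets of lengths} already gives that each $\mathsf{L}(x)$ is an arithmetic progression of common difference $\delta$, the inclusion $\mathcal{U}_k(S_r) \subseteq k + \delta\zz$ is automatic, and case~(3) is immediate: $r \in \nn$ makes $S_r \cong \nn_0$ factorial, so every $\mathsf{L}(x)$ is a singleton.

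For case~(1), where $r < 1$ and $\delta = d - n$, the main step is to show that $|\mathsf{L}(x)| > 1$ forces $\min\mathsf{L}(x) \ge n$. Indeed, if the unique min-length factorization $z = \sum \alpha_i r^i$ had $|z| < n$, every coefficient would satisfy $\alpha_i \le |z| < n$, so by Lemma~\ref{lem:factorization of extremal length II}(3) we would have $\sup\mathsf{L}(x) < \infty$, contradicting the fact that $\mathsf{L}(x)$ is an infinite arithmetic progression (Theorem~\ref{thm:sets of lengths}(1)). Combined with $\min\mathsf{L}(x) \le k$ and $\min\mathsf{L}(x) \equiv k \pmod{\delta}$, this pins down the smallest achievable value: for $k < n$ none exists and $\mathcal{U}_k = \{k\}$; for $n \le k < d$ the bound $k - n < \delta$ forces $\min\mathsf{L}(x) = k$; for $k \ge d$ the least admissible value is $m := n + ((k - n) \bmod \delta) \in [n, d-1]$. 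In the last two subcases I would realize the bound with the integer $x = m$ (noting $m = k$ in the middle case), since the factorization $m \cdot 1$ is its unique min-length factorization with $\alpha_0 = m \ge n$, so Lemma~\ref{lem:factorization of extremal length II}(3) and Theorem~\ref{thm:sets of lengths}(1) give $\mathsf{L}(m) = \{m + j\delta : j \in \nn_0\}$, exhibiting the claimed union with $\ell = (m - k)/\delta$.

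Case~(2), where $r \in \qq_{>1} \setminus \nn$ and $\delta = n - d$, follows the same template with Lemma~\ref{lem:factorization of extremal length I} replacing Lemma~\ref{lem:factorization of extremal length II} and $d$ playing the role of $n$: now $|\mathsf{L}(x)| > 1$ forces $\min\mathsf{L}(x) \ge d$, because otherwise all coefficients of the unique min-length factorization are $< d$ and that factorization is simultaneously the max-length one by part~(3) of the lemma, making $\mathsf{L}(x)$ a singleton. The three subcases yield the stated formulas with $m \in [d, n-1]$. The complication is that each $\mathsf{L}(x)$ is now finite by Theorem~\ref{thm:sets of lengths}(3), so no single witness realizes the infinite progression. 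I would remedy this with the family $x_N := m r^N$: the factorization $m \cdot r^N$ has $\alpha_N = m < n$ and so is min-length by Lemma~\ref{lem:factorization of extremal length I}(1), and applying the identity $d r^{i+1} = n r^i$ once at each level $i = N-1, N-2, \dots, 0$ produces a factorization of $x_N$ of length $m + N\delta$. Consequently $\mathsf{L}(x_N) \supseteq \{m, m + \delta, \dots, m + N\delta\}$, and letting $N \to \infty$ exhausts $m + \delta\nn_0$.

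The principal obstacle is the construction in case~(2): one must verify that the successive splits at levels $N-1, N-2, \dots, 0$ really yield valid factorizations of $x_N$ (by bookkeeping the coefficients level by level) and that $\min\mathsf{L}(x_N) = m$ holds uniformly in $N$, which is where Lemma~\ref{lem:factorization of extremal length I}(1) and the choice $m < n$ are both essential. The residual task---matching $m$ to the correct residue class modulo $\delta$ and identifying $\ell = (m-k)/\delta \in \zz_{<0}$ in the third subcase of each case---is routine arithmetic.
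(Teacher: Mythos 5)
Your proposal is correct and follows essentially the same route as the paper's proof: both rest on Theorem~\ref{thm:sets of lengths} for the arithmetic-progression structure of $\mathsf{L}(x)$, on Lemmas~\ref{lem:factorization of extremal length II} and~\ref{lem:factorization of extremal length I} to control minimum-length factorizations, and on the witnesses $k \cdot 1$ and $k r^N$ (your level-by-level splitting of $m\,r^N$ via $\mathsf{d}(r)r^{i+1} = \mathsf{n}(r)r^i$ reproduces exactly the paper's explicit factorization $z_n$ of $k r^n$). The only substantive difference is a refinement: by showing $\min \mathsf{L}(x) \ge \mathsf{n}(r)$ (resp.\ $\ge \mathsf{d}(r)$) for every $x$ with $|\mathsf{L}(x)| > 1$ and combining this with the congruence $\min\mathsf{L}(x) \equiv k \pmod{|\mathsf{n}(r)-\mathsf{d}(r)|}$, you pin down the left endpoint of $\mathcal{U}_k(S_r)$ and hence the exact value of $\ell$ in the third subcases, where the paper only asserts that some $\ell \in \zz_{<0}$ exists.
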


\begin{proof}
	That $\mathcal{U}_k(S_r)$ is an arithmetic progression containing $k$ with distance $| \mathsf{n}(r) - \mathsf{d}(r)|$ is an immediate consequence of Theorem~\ref{thm:sets of lengths}.
	
	To show~(1), assume that $r < 1$. Suppose first that $k < \mathsf{n}(r)$. Take $L \in \mathcal{L}(S_r)$ with $k \in L$, and take $x \in S_r$ such that $L = \mathsf{L}(x)$. Choose $z = \sum_{i=0}^N \alpha_i r^i \in \mathsf{Z}(x)$ with $\sum_{i=0}^N \alpha_i = k$. Since $\alpha_i \le k < \mathsf{n}(r)$ for $i \in \llb 0, N \rrb$, Lemma~\ref{lem:factorization of extremal length II} ensures that $|\mathsf{Z}(x)| = 1$, which yields $L = \mathsf{L}(x) = \{k\}$. Thus, $\mathcal{U}_k(S_r) = \{k\}$. Now suppose that $\mathsf{n}(r) \le k < \mathsf{d}(r)$. Notice that the element $k \in S_r$ has a factorization of length $k$, namely, $k \cdot 1 \in \mathsf{Z}(k)$. Now we can use Lemma~\ref{lem:factorization of extremal length II}(3) to conclude that $\sup \mathsf{L}(k) = \infty$. Hence $\rho_k(S_r) = \infty$. On the other hand, let $x$ be an element of $S_r$ having a factorization of length $k$. Since $k < \mathsf{d}(r)$, it follows by Lemma~\ref{lem:factorization of extremal length II}(1) that any length-$k$ factorization in $\mathsf{Z}(x)$ is a factorization of~$x$ of minimum length. Hence $\lambda_k(S_r) = k$ and, therefore,
	\[
		\mathcal{U}_k(S_r) = \{ k + j (\mathsf{d}(r) - \mathsf{n}(r)) \mid j \in \nn_0 \}.
	\]
	Now assume that $k \ge \mathsf{d}(r)$. As $k \ge \mathsf{n}(r)$, we have once again that $\rho_k(S_r) = \infty$. Also, because $k \ge \mathsf{d}(r)$ one finds that $(k - \mathsf{d}(r))r + \mathsf{n}(r) \cdot 1$ is a factorization in $\mathsf{Z}(kr)$ of length $k - (\mathsf{d}(r) - \mathsf{n}(r))$. Then there exists $\ell \in \zz_{< 0}$ such that
	\[
		\mathcal{U}_k(S_r) = \{ k + j (\mathsf{d}(r) - \mathsf{n}(r)) \mid j \in \zz_{\ge \ell} \}.
	\]
	
	Suppose now that $r \in \qq_{> 1} \setminus \nn$. Assume first that $k < \mathsf{d}(r)$. Take $L \in \mathcal{L}(S_r)$ containing $k$ and $x \in S_r$ such that $L = \mathsf{L}(x)$. If $z = \sum_{i=0}^N \alpha_i r^i \in \mathsf{Z}(x)$ satisfies $|z| = k$, then $\alpha_i \le k < \mathsf{d}(r)$ for $i \in \llb 0,N \rrb$, and Lemma~\ref{lem:factorization of extremal length I} implies that $L = \mathsf{L}(x) = \{k\}$. As a result, $\mathcal{U}_k(S_r) = \{k\}$. Suppose now that $\mathsf{d}(r) \le k < \mathsf{n}(r)$. In this case, for each $n > k$, we can consider the element $x_n = k r^n \in S_r$ and set $L_n := \mathsf{L}(x_n)$. It is not hard to check that
	\[
		z_n := \mathsf{n}(r) \cdot 1 + \bigg( \sum_{i=1}^{n-1} \big( \mathsf{n}(r) - \mathsf{d}(r)\big) r^i \bigg) + \big( k - \mathsf{d}(r) \big) r^n
	\]
	is a factorization of $x_n$. Therefore $|z_n| = k + n( \mathsf{n}(r) - \mathsf{d}(r)) \in L_n$. Since $k \in L_n$ for every $n \in \nn$, it follows that $\rho_k(S_r) = \infty$. On the other hand, it follows by Lemma~\ref{lem:factorization of extremal length I}(1) that any factorization of length $k$ of an element $x \in S_r$ must be a factorization of minimum length in $\mathsf{Z}(x)$. Hence $\lambda_k(S_r) = k$, which implies that
	\[
		\mathcal{U}_k(S_r) = \{ k + j (\mathsf{n}(r) - \mathsf{d}(r)) \mid j \in \nn_0 \}.
	\]
	Assume now that $k \ge \mathsf{n}(r)$. As $k \ge \mathsf{d}(r)$ we still obtain $\rho_k(S_r) = \infty$. In addition, because $k \ge \mathsf{n}(r)$, we have that $(k - \mathsf{n}(r)) \cdot 1 + \mathsf{d}(r)r$ is a factorization in $\mathsf{Z}(k)$ having length $k - (\mathsf{n}(r) - \mathsf{d}(r))$. Thus, there exists $\ell \in \zz_{< 0}$ such that
	\[
		\mathcal{U}_k(S_r) = \{ k + j (\mathsf{n}(r) - \mathsf{d}(r)) \mid j \in \zz_{\ge \ell} \}.
	\]
	
	Finally, condition~(3) follows directly from the fact that $S_r = (\nn_0,+)$ when $r \in \nn$ and, therefore, for every $k \in \nn$ there exists exactly one element in $S_r$ having a length-$k$ factorization, namely $k$.
\end{proof}

\begin{cor} \label{cor:local elastiicity}
	Take $r \in \qq_{>0}$ such that $S_r$ is atomic. Then $\rho(S_r) < \infty$ if and only if $\rho_k(S_r) < \infty$ for every $k \in \nn$.
\end{cor}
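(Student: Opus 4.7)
The plan is to split the biconditional and note that both directions are short consequences of material already in the paper; this is really a corollary in the strict sense, packaging together the formula for $\rho(M)$ in terms of $\rho_k(M)$, Corollary~\ref{cor:elasticity of rational semirings}, and Proposition~\ref{local}.

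For the forward direction $(\Rightarrow)$, I would simply invoke the identity
\[
    \rho(M) = \sup\bigg\{\frac{\rho_k(M)}{k} \ \bigg{|} \ k \in \nn \bigg\}
\]
recorded just before Proposition~\ref{local}. It immediately yields $\rho_k(S_r) \le k\,\rho(S_r)$ for every $k \in \nn$, so if $\rho(S_r) < \infty$ then $\rho_k(S_r) < \infty$ for all $k$. No further work is needed.

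For the reverse direction $(\Leftarrow)$ I would argue by contrapositive: suppose $\rho(S_r) = \infty$, and produce some $k$ with $\rho_k(S_r) = \infty$. By Corollary~\ref{cor:elasticity of rational semirings}, $\rho(S_r) = \infty$ forces $r \notin \nn$, so $|\mathsf{n}(r) - \mathsf{d}(r)| \ge 1$. Now I choose any $k \ge \max\{\mathsf{n}(r), \mathsf{d}(r)\}$ and apply Proposition~\ref{local}: in both case~(1) ($r<1$) and case~(2) ($r \in \qq_{>1}\setminus\nn$), the corresponding description of $\mathcal{U}_k(S_r)$ is an infinite arithmetic progression with positive common difference $|\mathsf{n}(r)-\mathsf{d}(r)|$, so $\rho_k(S_r) = \sup \mathcal{U}_k(S_r) = \infty$.

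There is no real obstacle here; the only mild subtlety is remembering to rule out the case $r \in \nn$ before quoting Proposition~\ref{local}, since Proposition~\ref{local}(3) gives $\mathcal{U}_k(S_r)=\{k\}$ in that case and would undermine the reverse direction if applied by mistake. Corollary~\ref{cor:elasticity of rational semirings} handles precisely this, which is why invoking it first makes the argument clean.
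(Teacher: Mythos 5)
Your proposal is correct and follows essentially the same route as the paper: the forward direction via the bound $\rho_k(S_r) \le k\,\rho(S_r)$ (the paper cites \cite[Proposition~1.4.2(1)]{GH06} for this, which is exactly the inequality you extract from the displayed identity), and the reverse direction by observing via Proposition~\ref{local} that $r \notin \nn$ forces $\rho_k(S_r) = \infty$ for all sufficiently large $k$, so that finiteness of every $\rho_k$ pins down $r \in \nn$ and hence $\rho(S_r) = 1$. Your contrapositive phrasing and the extra appeal to Corollary~\ref{cor:elasticity of rational semirings} are cosmetic differences only.
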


\begin{proof}
	It follows from \cite[Proposition~1.4.2(1)]{GH06} that $\rho_k(S_r) \le k \rho(S_r)$, which yields the direct implication. For the reverse implication, we first notice that, by Proposition~\ref{local}, if $r \notin \nn$ and $k > \max \{\mathsf{n}(r), \mathsf{d}(r)\}$, then $\rho_k(S_r) = \infty$. Hence the fact that $\rho_k(S_r) < \infty$ for every $k \in \nn$ implies that $r \in \nn$. In this case $\rho(S_r) = \rho(\nn_0) = 1$, and so $\rho(S_r) < \infty$.
\end{proof}

As~\cite[Proposition~1.4.2(1)]{GH06} holds for every atomic monoid, the direct implication of Corollary~\ref{cor:local elastiicity} also holds for any atomic monoid. However, the reverse implication of the same corollary is not true even in the context of Puiseux monoids.

\begin{example}
	Let $\{p_n\}$ be a strictly increasing sequence of primes, and consider the Puiseux monoid
	\[
		M := \bigg\langle \frac{p_n^2 + 1}{p_n} \ \bigg{|} \ n \in \nn \bigg \rangle.
	\]
	It is not hard to verify that the monoid $M$ is atomic with set of atoms given by the displayed generating set. Then it follows from \cite[Theorem~3.2]{GO17} that $\rho(S_r) = \infty$. However, \cite[Theorem~4.1(1)]{mG19} guarantees that $\rho_k(M) < \infty$ for every $k \in \nn$.
\end{example}

\section{The Tame Degree}
\label{sec:tame degree}

\subsection{Omega Primality} Let $M$ be a reduced atomic monoid. The \emph{omega function} $\omega \colon M \to \nn_0 \cup \{\infty\}$ is defined as follows: for each $x \in M^\bullet$ we take $\omega(x)$ to be the smallest $n \in \nn$ satisfying that whenever $x \mid_M \sum_{i=1}^t a_i$ for some $a_1, \dots, a_t \in \mathcal{A}(M)$, there exists $T \subseteq \llb 1, t \rrb$ with $|T| \le n$ such that $x \mid_M \sum_{i \in T} a_i$.  If no such $n$ exists, then $\omega(x) = \infty$. In addition, we define $\omega(0) = 0$. Then we define
\[
	\omega(M) := \sup\{\omega(a) \mid a \in \mathcal{A}(M)\}.
\]
Notice that $\omega(x) = 1$ if and only if $x$ is prime in $M$. The omega function was introduced by Geroldinger and Hassler in~\cite{GH08} to measure how far in an atomic monoid an element is from being prime.

Before proving the main results of this section, let us collect two technical lemmas.

\begin{lemma} \label{lem:element divisible by 1}
	If $r \in \qq_{> 1}$, then $1 \mid_{S_r} \mathsf{d}(r) r^k$ for every $k \in \nn_0$.
\end{lemma}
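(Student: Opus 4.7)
The plan is to prove this by induction on $k \ge 0$, using the reformulation that $1 \mid_{S_r} \mathsf{d}(r) r^k$ is equivalent to $\mathsf{d}(r) r^k - 1 \in S_r$, since $1 = r^0 \in S_r$. Throughout, I would write $a := \mathsf{n}(r)$ and $b := \mathsf{d}(r)$, noting that $br = a$ and that $r > 1$ forces $a \ge b + 1$, i.e., $a - b \in \nn$. I would also use freely that $\nn_0 \subseteq S_r$, since $1 \in S_r$.

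For the base case $k = 0$, the quantity $\mathsf{d}(r) r^0 - 1 = b - 1$ is a nonnegative integer, and therefore lies in $\nn_0 \subseteq S_r$.

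For the inductive step, assume $\mathsf{d}(r) r^k - 1 \in S_r$. The key identity $br = a$ gives
\[
    \mathsf{d}(r) r^{k+1} = b r \cdot r^k = a r^k = (a-b) r^k + \mathsf{d}(r) r^k,
\]
and subtracting $1$ yields
\[
    \mathsf{d}(r) r^{k+1} - 1 = (a-b) r^k + \big( \mathsf{d}(r) r^k - 1 \big).
\]
Since $a - b \in \nn$ and $r^k \in S_r$, the first summand is an element of $S_r$; the second summand is in $S_r$ by the inductive hypothesis. Their sum is therefore in $S_r$, closing the induction.

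There is no real obstacle here: the content of the argument is the single identity $(a-b) r^k + br^k = ar^k = \mathsf{d}(r) r^{k+1}$, which lets one bootstrap divisibility by $1$ at level $k$ to level $k+1$. The only thing one has to be mindful of is that the hypothesis $r > 1$ (equivalently $a > b$) is essential so that $a - b$ is a positive integer, guaranteeing that $(a-b) r^k$ really is an element of $S_r$ rather than a formal difference.
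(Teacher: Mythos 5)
Your proof is correct, and it is essentially the paper's argument in inductive clothing: unrolling your recursion $\mathsf{d}(r)r^{k+1}-1=(\mathsf{n}(r)-\mathsf{d}(r))r^{k}+\big(\mathsf{d}(r)r^{k}-1\big)$ reproduces exactly the explicit telescoping factorization $\mathsf{n}(r)+\sum_{i=1}^{k-1}(\mathsf{n}(r)-\mathsf{d}(r))r^{i}$ of $\mathsf{d}(r)r^{k}$ that the paper writes down directly. Your observation that $r>1$ is what makes $\mathsf{n}(r)-\mathsf{d}(r)$ a positive integer is the same point the paper relies on implicitly.
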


\begin{proof}
	If $r \in \nn$, then $S_r = (\nn_0,+)$ and the statement of the lemma follows straightforwardly. Then we assume that $r \in \qq_{> 1} \setminus \nn$. For $k=0$, the statement of the lemma holds trivially. For $k \in \nn$, consider the factorization $z_k := \mathsf{d}(r) \, r^k \in \mathsf{Z}(S_r)$. The factorization
	\[
		z := \mathsf{n}(r) + \sum_{i=1}^{k-1} (\mathsf{n}(r) - \mathsf{d}(r)) r^i
	\]
	belongs to $\mathsf{Z}(\phi(z_k))$ (recall that $\phi \colon \mathsf{Z}(S_r) \to S_r$ is the factorization homomorphism of~$S_r$). This is because
	\begin{align*}
		\mathsf{n}(r) + \sum_{i=1}^{k-1} (\mathsf{n}(r) - \mathsf{d}(r)) r^i
			&= \mathsf{n}(r) + \sum_{i=1}^{k-1} \mathsf{n}(r) r^i - \sum_{i=1}^{k-1} \mathsf{d}(r) r^i \\ &= \mathsf{n}(r) + \sum_{i=1}^{k-1} \mathsf{n}(r) r^i - \sum_{i=1}^{k-1} \mathsf{n}(r) r^{i-1} = \mathsf{d}(r) r^k.
	\end{align*}
	Hence $1 \mid_{S_r} \mathsf{d}(r) r^k$
\end{proof}

\begin{lemma} \label{lem:1 dividies constant coefficient of min-length factorization}
	Take $r \in \qq \cap (0,1)$ such that $S_r$ is atomic, and let $\sum_{i=0}^N \alpha_i r^i$ be the factorization in $\mathsf{Z}(x)$ of minimum length. Then $\alpha_0 \ge 1$ if and only if $1 \mid_{S_r} x$.
\end{lemma}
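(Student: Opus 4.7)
The plan is to handle the two implications asymmetrically: the forward direction is an immediate rewriting, while the reverse direction requires tracking what the length-reducing algorithm from the proof of Lemma~\ref{lem:factorization of extremal length II} does to the constant coefficient.

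\textbf{Forward direction.} Suppose the unique minimum-length factorization of $x$ is $z = \sum_{i=0}^N \alpha_i r^i$ with $\alpha_0 \ge 1$. Then
\[
	x = \alpha_0 \cdot 1 + \sum_{i=1}^N \alpha_i r^i = 1 + \bigg( (\alpha_0 - 1) \cdot 1 + \sum_{i=1}^N \alpha_i r^i \bigg),
\]
and the parenthesized expression lies in $S_r$ because every $r^i \in S_r$ and $\alpha_0 - 1 \ge 0$. Hence $1 \mid_{S_r} x$.

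\textbf{Reverse direction.} Suppose $1 \mid_{S_r} x$, so that $x = 1 + y$ for some $y \in S_r$. Choose any factorization $\sum_{i=0}^M \gamma_i r^i \in \mathsf{Z}(y)$ (where if $y=0$ we take the empty factorization), and form
\[
	z^{(0)} := (\gamma_0 + 1) \cdot 1 + \sum_{i=1}^M \gamma_i r^i \in \mathsf{Z}(x).
\]
Then $z^{(0)}$ has constant coefficient at least $1$. Now I would run the length-reducing procedure used in the proof of Lemma~\ref{lem:factorization of extremal length II}(1): while some coefficient of $r^i$ with $i \ge 1$ is at least $\mathsf{d}(r)$, apply the local rewrite
\[
	\alpha_i r^i \ \longmapsto \ (\alpha_i - \mathsf{d}(r)) r^i + \mathsf{n}(r) r^{i-1}.
\]
Each application strictly decreases the length, so after finitely many steps we reach a factorization $z^{(\infty)} = \sum_{i=0}^{N} \alpha_i r^i \in \mathsf{Z}(x)$ in which $\alpha_i < \mathsf{d}(r)$ for every $i \in \llb 1, N \rrb$. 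By Lemma~\ref{lem:factorization of extremal length II}(1)--(2), $z^{(\infty)}$ must coincide with the unique minimum-length factorization of $x$.

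\textbf{Key monotonicity step.} It remains to observe that the constant coefficient is non-decreasing under each rewrite: when the rule is applied at index $i \ge 2$, the coefficient of $1$ is untouched; when it is applied at index $i = 1$, the coefficient of $1$ strictly increases by $\mathsf{n}(r) \ge 1$. Consequently the constant coefficient of $z^{(\infty)}$ is at least that of $z^{(0)}$, which is at least $1$, so $\alpha_0 \ge 1$, as required.

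I don't foresee any real obstacle here; the only thing worth verifying carefully is the monotonicity claim above, which is where the hypothesis $r<1$ is implicitly used (the reduction rule pushes mass down to lower powers of $r$, toward the constant term).
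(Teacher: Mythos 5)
Your proof is correct and follows essentially the same route as the paper: both start from a factorization of $x$ in which the atom $1$ appears, repeatedly apply the rewrite $\mathsf{d}(r)\,r^i = \mathsf{n}(r)\,r^{i-1}$ until Lemma~\ref{lem:factorization of extremal length II}(1)--(2) identifies the result as the unique minimum-length factorization, and observe that the constant coefficient never decreases along the way. Your explicit ``key monotonicity step'' is exactly the point the paper compresses into ``Notice that the atom $1$ appears in $z''$.''
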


\begin{proof}
	The direct implication is straightforward. For the reverse implication, suppose that $1 \mid_{S_r} x$. Then there exists a factorization $z' := \sum_{i=0}^K \beta_i r^i \in \mathsf{Z}(x)$ such that $\beta_0 \ge 1$. If $\beta_i \ge \mathsf{d}(r)$ for some $i \in \llb 1,K \rrb$, then we can use the identity $\mathsf{d}(r) r^i = \mathsf{n}(r) r^{i-1}$ to find another factorization $z'' \in \mathsf{Z}(x)$ such that $|z''| < |z'|$. Notice that the atom $1$ appears in~$z''$. Then we can replace $z'$ by $z''$. After carrying out such a replacement as many times as possible, we can guarantee that $\beta_i < \mathsf{d}(r)$ for $i \in \llb 1,K \rrb$. Then Lemma~\ref{lem:factorization of extremal length II}(1) ensures that $z'$ is a minimum-length factorization of $x$. Now Lemma~\ref{lem:factorization of extremal length II}(2) implies that $z' = z$. Finally, $\alpha_0 = \beta_0 \ge 1$ follows from the fact that the atom $1$ appears in~$z'$.
\end{proof}

\begin{prop} \label{prop:omega primality}
	Take $r \in \qq_{> 0}$ such that  $S_r$ is atomic.
	\begin{enumerate}
		\item If $r<1$, then $\omega(1) = \infty$.
		\vspace{3pt}
		
		\item If $r \in \nn$, then $\omega(1) = 1$.
		\vspace{3pt}
		
		\item If $r \in \qq_{> 1} \setminus \nn$, then $\omega(1) = \mathsf{d}(r)$.
	\end{enumerate}
\end{prop}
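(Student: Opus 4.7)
The three cases will be treated very differently, with Part~(2) essentially trivial and the real work concentrated in Parts~(1) and~(3). For Part~(2), $r \in \nn$ gives $S_r = (\nn_0, +)$, in which the only atom~$1$ is prime, so $\omega(1) = 1$ follows at once.

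For Part~(1), I plan to exploit the fact that when $r < 1$ the atoms $r^N$ become arbitrarily small. Fix $n \in \nn$ and choose $N$ so large that $n r^N < 1$; then take $t := \mathsf{d}(r)^N$ copies of the atom $r^N$ (an atom by Theorem~\ref{thm:atomic classification of multiplicative cyclic Puiseux monoids}(3), since $\mathsf{n}(r), \mathsf{d}(r) > 1$). Their sum is $\mathsf{n}(r)^N \in \nn$, visibly divisible by~$1$ in $S_r$, but for any nonempty $T \subseteq \llb 1, t \rrb$ with $|T| \le n$ the subsum $|T| r^N$ is strictly less than~$1$, hence $|T| r^N - 1 < 0$ cannot lie in the nonnegative monoid $S_r$, so $1 \nmid_{S_r} |T| r^N$; the empty $T$ is excluded because $S_r$ is reduced. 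Thus $\omega(1) > n$ for every $n$, giving $\omega(1) = \infty$.

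Part~(3) requires the auxiliary characterization: for $r \in \qq_{>1} \setminus \nn$, one has $1 \mid_{S_r} x$ if and only if the constant coefficient in the unique max-length factorization of~$x$ is at least~$1$. The reverse direction is immediate. For the forward direction, writing $x = 1 + y$ with $y \in S_r$ and prepending~$1$ to any factorization of~$y$ produces a factorization of~$x$ whose constant coefficient is at least~$1$; I then lengthen this factorization to the unique max-length one by repeatedly applying the moves that replace $\mathsf{d}(r)$ copies of $r^i$ (with $i \ge 1$) by $\mathsf{n}(r)$ copies of $r^{i-1}$, none of which decrease the number of~$1$'s. Granted this lemma, the upper bound $\omega(1) \le \mathsf{d}(r)$ is a clean trichotomy: if $1 \mid_{S_r} \sum_{i=1}^t a_i$ and some $a_i = 1$, the singleton $T = \{i\}$ works; if some $r^k$ appears at least $\mathsf{d}(r)$ times among the $a_i$, then Lemma~\ref{lem:element divisible by 1} gives a sub-sum of exactly $\mathsf{d}(r)$ atoms that is divisible by~$1$; otherwise $\sum a_i = \sum_{k \ge 1} n_k r^k$ with every $n_k < \mathsf{d}(r)$ is already in max-length form by Lemma~\ref{lem:factorization of extremal length I}(3), yet has constant term~$0$, contradicting the divisibility assumption. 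The matching lower bound comes from $\mathsf{d}(r)$ copies of $r$: their sum is $\mathsf{n}(r) \in \nn$, while any proper sub-sum $jr$ with $1 \le j < \mathsf{d}(r)$ is already in max-length form with constant coefficient~$0$, so is not divisible by~$1$.

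The main obstacle is the auxiliary max-length characterization used in Part~(3): it is the $r > 1$ analogue of Lemma~\ref{lem:1 dividies constant coefficient of min-length factorization}, but has to be proved afresh, via the explicit move-based argument above, because in the $r > 1$ regime it is the \emph{maximum}-length factorization rather than the minimum-length factorization whose coefficient at~$1$ detects divisibility by~$1$. Once this lemma is in hand, the remaining bookkeeping reduces to the extremal-length descriptions from Lemma~\ref{lem:factorization of extremal length I} and the cyclic identity in Lemma~\ref{lem:element divisible by 1}.
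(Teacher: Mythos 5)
Your proposal is correct, and parts of it take a genuinely different route from the paper. For Part~(1) the paper works with the fixed element $x = \mathsf{n}(r)$, exhibits the factorization $\mathsf{d}(r)r^{n+1} + \sum_{i=1}^n(\mathsf{d}(r)-\mathsf{n}(r))r^i$ of it, and uses Lemma~\ref{lem:1 dividies constant coefficient of min-length factorization} (divisibility by $1$ is detected by the constant coefficient of the \emph{minimum}-length factorization) to show that no proper sub-factorization is divisible by $1$; your argument with $\mathsf{d}(r)^N$ copies of the single tiny atom $r^N$, chosen so that $nr^N < 1$, is simpler and avoids that lemma entirely, since any subsum of at most $n$ atoms is smaller than $1$ and so cannot be divisible by $1$ in a positive monoid. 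For Part~(3) your skeleton matches the paper's: the upper bound comes from finding either the atom $1$ itself or $\mathsf{d}(r)$ copies of some $r^k$ (then invoking Lemma~\ref{lem:element divisible by 1}), and the lower bound from $\mathsf{d}(r)$ copies of $r$. The difference is in how the residual case is dismissed: the paper observes that $1 \mid_{S_r} x$ with $1$ absent from $z$ forces $|\mathsf{Z}(x)|>1$, so by Lemma~\ref{lem:factorization of extremal length I}(5) the coefficient conditions of (1) and (3) cannot both hold, yielding some $\alpha_i \ge \mathsf{d}(r)$; you instead prove a new auxiliary lemma (the $r>1$ analogue of Lemma~\ref{lem:1 dividies constant coefficient of min-length factorization}, with ``maximum length'' replacing ``minimum length''), whose move-based proof is sound and which also handles your lower bound, where the paper uses the uniqueness statement Lemma~\ref{lem:factorization of extremal length I}(5) directly. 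Your route costs one extra lemma but makes the mechanism (which extremal factorization detects divisibility by $1$ in each regime) more transparent; the paper's is shorter because it leans on the already-proved uniqueness statements.
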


\begin{proof}
	To verify~(1), suppose that $r < 1$. Then set $x = \mathsf{n}(r) \in S_r$ and note that $1 \mid_{S_r} x$. Fix an arbitrary $N \in \nn$. Take now $n \in \nn$ such that $\mathsf{d}(r) + n( \mathsf{d}(r) - \mathsf{n}(r)) \ge N$. It is not hard to check that
	\[
		z := \mathsf{d}(r) r^{n+1} + \sum_{i=1}^n (\mathsf{d}(r) - \mathsf{n}(r) ) r^i
	\]
	is a factorization in $\mathsf{Z}(x)$. Suppose that $z' = \sum_{i=1}^K \alpha_i r^i$ is a sub-factorization of $z$ such that $1 \mid_{S_r} x' := \phi(z')$. Now we can move from $z'$ to a factorization $z''$ of $x'$ of minimum length by using the identity $\mathsf{d}(r)r^{i+1} = \mathsf{n}(r)r^i$ finitely many times. As $1 \mid_{S_r} x'$, it follows by Lemma~\ref{lem:1 dividies constant coefficient of min-length factorization} that the atom $1$ appears in $z''$. Therefore, when we obtained $z''$ from~$z'$ (which does not contain $1$ as a formal atom), we must have applied the identity $\mathsf{d}(r)r = \mathsf{n}(r) \cdot 1$ at least once. As a result $z''$ contains at least $\mathsf{n}(r)$ copies of the atom~$1$. This implies that $x' = \phi(z'') \ge \mathsf{n}(r) = x$. Thus, $x' = x$, which implies that $z'$ is the whole factorization~$z$. As a result, $\omega(1) \ge |z| \ge N$. Since $N$ was arbitrarily taken, we can conclude that $\omega(1) = \infty$, as desired.
	
	Notice that~(2) is a direct consequence of the fact that $1$ is a prime element in $S_r = (\nn_0,+)$.
	
	Finally, we prove~(3). Take $z = \sum_{i=0}^N \alpha_i r^i \in \mathsf{Z}(x)$ for some $x \in S_r$ such that $1 \mid_{S_r} x$. We claim that there exists a sub-factorization $z'$ of $z$ such that $|z'| \le \mathsf{d}(r)$ and $1 \mid_{S_r} \phi(z')$, where $\phi$ is the factorization homomorphism of $S_r$. If $\alpha_0 > 0$, then $1$ is one of the atoms showing in $z$ and our claim follows trivially. Therefore assume that $\alpha_0 = 0$. Since $1 \mid_{S_r} x$ and $1$ does not show in $z$, we have that $|\mathsf{Z}(x)| > 1$. Then conditions~(1) and~(3) in Lemma~\ref{lem:factorization of extremal length I} cannot be simultaneously true, which implies that $\alpha_i \ge \mathsf{d}(r)$ for some $i \in \llb 1, N \rrb$. Lemma~\ref{lem:element divisible by 1} ensures now that $1 \mid_{S_r} \phi(z')$ for the sub-factorization $z' := \mathsf{d}(r)r^i$ of $z$. This proves our claim and implies that $\omega(1) \le \mathsf{d}(r)$. On the other hand, take $w$ to be a strict sub-factorization of $\mathsf{d}(r) \, r$. Note that the atom $1$ does not appear in $w$. In addition, it follows by Lemma~\ref{lem:factorization of extremal length I} that $|\mathsf{Z}(\phi(w))| = 1$. Hence $1 \nmid_{S_r} \phi(w)$. As a result, we have that $\omega(1) \ge \mathsf{d}(r)$, and~(3) follows.
\end{proof}


\subsection{Tameness} For an atom $a \in \mathcal{A}(M)$, the {\it local tame degree} $\mathsf{t}(a) \in \nn_0$ is the smallest $n \in \nn_0 \cup \{\infty\}$ such that in any given factorization of $x \in a + M$  at most $n$ atoms have to be replaced by at most $n$ new atoms to obtain a new factorization of $x$ that contains $a$. More specifically, it means that $\mathsf{t}(a)$ is the smallest $n \in \nn_0 \cup \{\infty\}$ with the following property: if $\mathsf{Z}(x) \cap (a + \mathsf{Z}(M)) \ne \emptyset$ and $z \in \mathsf{Z}(x)$, then there exists a $z' \in \mathsf{Z}(x) \cap (a + \mathsf{Z}(M))$ such that $\mathsf{d}(z,z') \le n$.

\begin{definition}
	An atomic monoid $M$ is said to be {\it locally tame} provided that $\mathsf{t}(a) < \infty$ for all $a \in \mathcal{A}(M)$.
\end{definition}

\noindent Every factorial monoid is locally tame (see \cite[Theorem~1.6.6 and Theorem~1.6.7]{GH06}). In particular, $(\nn_0,+)$ is locally tame. The tame degree of numerical monoids was first considered in~\cite{CGL09}. 
The factorization invariant $\tau \colon M \to \nn_0 \cup \{\infty\}$, which was introduced in~\cite{GH08}, is defined as follows: for $k \in \nn$ and $b \in M$, we take
\[
	\mathsf{Z}_{\text{min}}(k,b) := \bigg\{ \sum_{i=1}^j a_i \in \mathsf{Z}(M) \ \bigg{|} \ j \le k, \ b \mid_M \sum_{i=1}^j a_i, \, \text{ and } \, b \nmid_M \sum_{i \in I} a_i \ \text{ for any } \ I \subsetneq \llb 1,j \rrb \bigg\}
\]
and then we set
\[
	\tau(b) = \sup_k \sup_z \big\{ \min \mathsf{L}\big(\phi(z) - b \big) \mid z \in \mathsf{Z}_{\text{min}}(k,b)\big\}.
\]
The monoid $M$ is called {\it (globally) tame} provided that the \emph{tame degree}
\[
	\mathsf{t}(M) = \sup \{\mathsf{t}(a) \mid a \in \mathcal{A}(M)\} < \infty.
\]

The following result will be used in the proof of Theorem~\ref{thm:cyclic semirings are no locally tame}.

\begin{theorem} \cite[Theorem~3.6]{GH08} \label{thm:characterization of locally tame monoids}
	Let $M$ be a reduced atomic monoid. Then $M$ is locally tame if and only if $\omega(a) < \infty$ and $\tau(a) < \infty$ for all $a \in \mathcal{A}(M)$.
\end{theorem}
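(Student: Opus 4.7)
The plan is to prove the equivalence by establishing explicit quantitative comparisons among the three invariants—specifically, $\omega(a) \le \mathsf{t}(a)$, $\tau(a) \le \omega(a) + \mathsf{t}(a) - 1$, and $\mathsf{t}(a) \le \max\{\omega(a),\, 1 + \tau(a)\}$—which together turn finiteness of $\mathsf{t}(a)$ into joint finiteness of $\omega(a)$ and $\tau(a)$ atom by atom; the theorem then follows by taking the supremum over $\mathcal{A}(M)$.

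For the forward direction, assume $\mathsf{t}(a) < \infty$ and fix a relation $a \mid_M \sum_{i=1}^t a_i$ with $a_i \in \mathcal{A}(M)$. Setting $z := a_1 + \cdots + a_t$, local tameness furnishes $z' = a + w \in \mathsf{Z}(\phi(z))$ with $\mathsf{d}(z, z') \le \mathsf{t}(a)$. Writing $z = u + v$ and $z' = u + v'$ with $u := \gcd(z, z')$, I would split on whether $a$ appears already in $u$ (so a singleton sub-factorization of $z$ works) or appears in $v'$ (in which case $\phi(v) = \phi(v')$ is still divisible by $a$ and $|v| \le \mathsf{t}(a)$); either way $\omega(a) \le \mathsf{t}(a)$. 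For $\tau(a)$, I would first observe that every $z \in \mathsf{Z}_{\min}(k, a)$ automatically satisfies $|z| \le \omega(a)$ (apply the definition of $\omega(a)$ to the atoms of $z$ and use the minimality condition cutting down any witness to the full index set), and then invoke local tameness on such $z$ to produce $a + w \in \mathsf{Z}(\phi(z))$ with $|w| \le |z| + \mathsf{t}(a) - 1$, which bounds $\min \mathsf{L}(\phi(z) - a)$ as required.

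For the converse, assume $\omega(a), \tau(a) < \infty$ and take any $x \in a + M$ with $z \in \mathsf{Z}(x)$. Using $\omega(a)$, I would extract a sub-factorization $\tilde z$ of $z$ with $a \mid_M \phi(\tilde z)$ and $|\tilde z| \le \omega(a)$, further refining it to lie in $\mathsf{Z}_{\min}(\omega(a), a)$. Then $\tau(a)$ supplies $y \in \mathsf{Z}(\phi(\tilde z) - a)$ with $|y| \le \tau(a)$. The splice $z' := (z - \tilde z) + a + y$ belongs to $\mathsf{Z}(x) \cap (a + \mathsf{Z}(M))$ and shares the block $z - \tilde z$ with $z$, so $\mathsf{d}(z, z') \le \max\{|\tilde z|,\, 1 + |y|\} \le \max\{\omega(a),\, 1 + \tau(a)\}$, yielding $\mathsf{t}(a) < \infty$.

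The main obstacle I anticipate is the forward inequality $\omega(a) \le \mathsf{t}(a)$: local tameness only asserts the existence of a nearby $a$-containing factorization $z'$, with no a priori control over whether the explicit atom $a$ in $z'$ emerges from the common part $u$ or from the replacement block $v'$, so one must argue carefully that in both cases a genuine sub-factorization of $z$ of length at most $\mathsf{t}(a)$ witnesses $a$-divisibility. Once this sub-factorization extraction is cleanly in hand, the bound on $\tau(a)$ and the converse splicing argument are essentially bookkeeping.
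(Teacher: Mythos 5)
Your argument is correct, but note that the paper offers no proof of this statement at all: it is imported verbatim as Theorem~3.6 of the cited reference [GH08], so there is nothing in the paper to compare against. Your three quantitative bounds and the splicing/sub-factorization arguments behind them are essentially the standard proof from that reference (and from the treatment of $\omega$ and $\mathsf{t}$ in Geroldinger--Halter-Koch), and each step checks out; the only cosmetic caveat is that $\omega(a)\le\mathsf{t}(a)$ should read $\omega(a)\le\max\{1,\mathsf{t}(a)\}$, since a prime atom has $\omega(a)=1$ but $\mathsf{t}(a)=0$ --- this does not affect the finiteness transfer or the equivalence.
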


We conclude this section by characterizing the cyclic rational semirings that are locally tame.

\begin{theorem} \label{thm:cyclic semirings are no locally tame}
	Take $r \in \qq_{>0}$ such that $S_r$ is atomic. Then the following conditions are equivalent:
	\begin{enumerate}
		\item $r \in \nn$;
		\vspace{3pt}
		\item $\omega(S_r) < \infty$;
		\vspace{3pt}
		\item $S_r$ is globally tame;
		\vspace{3pt}
		\item $S_r$ is locally tame.
	\end{enumerate}
\end{theorem}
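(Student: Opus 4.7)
The plan is to prove the equivalence via the cycle $(1) \Rightarrow (3) \Rightarrow (4) \Rightarrow (1)$ together with $(1) \Rightarrow (2) \Rightarrow (1)$. The direction $(1) \Rightarrow (2), (3), (4)$ is immediate: when $r \in \nn$ we have $S_r \cong (\nn_0, +)$, which is factorial---its only atom $1$ is prime, so $\omega(S_r) = 1$ and $S_r$ is globally (and hence locally) tame. The implication $(3) \Rightarrow (4)$ is trivial. Everything thus hinges on $(2) \Rightarrow (1)$ and $(4) \Rightarrow (1)$, which I would prove by contrapositive with two subcases depending on the position of $r$ relative to $1$.

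First, for $r \in (0,1) \cap \qq$ with $S_r$ atomic, Proposition~\ref{prop:omega primality}(1) already gives $\omega(1) = \infty$, whence $\omega(S_r) = \infty$ (failing (2)), and by Theorem~\ref{thm:characterization of locally tame monoids} $S_r$ is not locally tame (failing (4) and hence (3)).

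The substantive case is $r \in \qq_{>1} \setminus \nn$, where Proposition~\ref{prop:omega primality}(3) only yields $\omega(1) = \mathsf{d}(r) < \infty$. For the failure of (2), I would consider the element $\mathsf{n}(r)^n = \mathsf{d}(r)^n r^n$ together with its factorization $\mathsf{n}(r)^n \cdot 1$: since $r > 1$ makes $1$ the smallest atom of $S_r$, every nonzero element of $S_r$ lies in $[1, \infty)$; combined with $r^n \notin \nn$, this forces any sub-factorization $k \cdot 1$ divisible by $r^n$ to satisfy $k \geq \lceil r^n \rceil + 1$. Hence $\omega(r^n) \geq \lceil r^n \rceil + 1 \to \infty$ as $n \to \infty$, so $\omega(S_r) = \infty$. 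For the failure of (4), my plan is to show $\tau(1) = \infty$ and invoke Theorem~\ref{thm:characterization of locally tame monoids}. For each $k \geq 1$, Lemma~\ref{lem:element divisible by 1} gives $1 \mid_{S_r} \mathsf{d}(r) r^k$; on the other hand, Lemma~\ref{lem:factorization of extremal length I}(5) ensures that each proper sub-factorization $j r^k$ (with $0 < j < \mathsf{d}(r)$) has unique factorization $j \cdot r^k$, which does not contain $1$, so $1 \nmid_{S_r} j r^k$. Thus $\mathsf{d}(r) r^k \in \mathsf{Z}_{\text{min}}(\mathsf{d}(r), 1)$. A direct geometric computation (using $\mathsf{d}(r)(r-1) = \mathsf{n}(r) - \mathsf{d}(r)$ and $\mathsf{d}(r) r = \mathsf{n}(r)$) then yields
\[
	\mathsf{d}(r) r^k - 1 = (\mathsf{n}(r) - 1) + (\mathsf{n}(r) - \mathsf{d}(r))\bigl( r + r^2 + \cdots + r^{k-1} \bigr),
\]
and since every coefficient on the right is strictly less than $\mathsf{n}(r)$, Lemma~\ref{lem:factorization of extremal length I}(1)--(2) identifies this as the unique minimum-length factorization of $\mathsf{d}(r) r^k - 1$, of length $(\mathsf{n}(r) - 1) + (\mathsf{n}(r) - \mathsf{d}(r))(k-1)$. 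This length grows linearly in $k$, so $\tau(1) = \infty$ and $S_r$ is therefore not locally tame.

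The main obstacle, confined to the $r > 1$ case, is mostly bookkeeping: certifying $\mathsf{d}(r) r^k \in \mathsf{Z}_{\text{min}}(\mathsf{d}(r), 1)$ via the uniqueness clause of Lemma~\ref{lem:factorization of extremal length I}(5), and carrying out the displayed geometric identity carefully. Both steps are applications of the structural results from Section~\ref{sec:set of length} rather than any conceptually new machinery, so once these pieces are in place all the implications close up cleanly.
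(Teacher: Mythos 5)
Your proposal is correct, and the hard part---showing $S_r$ is not locally tame when $r \in \qq_{>1} \setminus \nn$ by proving $\tau(1) = \infty$ via the factorizations $\mathsf{d}(r)\,r^k \in \mathsf{Z}_{\text{min}}(\mathsf{d}(r),1)$ and the minimum-length factorization $(\mathsf{n}(r)-1)\cdot 1 + \sum_{i=1}^{k-1}(\mathsf{n}(r)-\mathsf{d}(r))r^i$ of $\mathsf{d}(r)r^k - 1$---is exactly the paper's argument, down to the length count $k(\mathsf{n}(r)-\mathsf{d}(r)) + \mathsf{d}(r) - 1$. Where you genuinely diverge is in how $\omega(S_r)$ enters the equivalence. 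The paper closes the loop as $(1)\Rightarrow(2)\Rightarrow(3)\Rightarrow(4)\Rightarrow(1)$, getting $(2)\Rightarrow(3)$ from the external inequality $\mathsf{t}(S_r) \le \omega(S_r)^2$ of Geroldinger--Kainrath, so it never has to exhibit an atom of infinite omega value when $r>1$ (Proposition~\ref{prop:omega primality} only gives $\omega(1) = \mathsf{d}(r) < \infty$ there, and the paper's Table~2 records $\omega(S_r)=\infty$ only for $r<1$). You instead prove $(1)\Rightarrow(3)$ directly (factorial monoids are tame) and give a self-contained proof of $(2)\Rightarrow(1)$: for $r\in\qq_{>1}\setminus\nn$ the presentation $r^n \mid_{S_r} \mathsf{n}(r)^n\cdot 1$ forces $\omega(r^n) \ge \lceil r^n\rceil + 1$, since any divisible sub-sum $k\cdot 1$ satisfies $k - r^n \in S_r^\bullet \subseteq [1,\infty)$ and $r^n\notin\zz$. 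That argument is sound and buys something the paper's route does not: an explicit, quantitative witness that $\omega(S_r)=\infty$ for all non-integer $r>1$, with no appeal to the tame-degree/omega comparison. The cost is that you must separately verify $(1)\Rightarrow(3)$, but that is immediate. Both routes rely identically on Theorem~\ref{thm:characterization of locally tame monoids} to convert $\omega(1)=\infty$ (for $r<1$) and $\tau(1)=\infty$ (for $r>1$) into failure of local tameness.
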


\begin{proof}
	That (1) implies (2) follows from Proposition~\ref{prop:omega primality}(2). Now suppose that (2) holds. Then~\cite[Proposition~3.5]{GK10} ensures that $\mathsf{t}(S_r) \le \omega(S_r)^2 < \infty$, which implies~(3). In addition, (3) implies (4) trivially. 
	
	To prove that (4) implies (1) suppose, by way of contradiction, that $r \in \qq_{> 0} \setminus \nn$. Let us assume first that $r < 1$. In this case, $\omega(1) = \infty$ by Proposition~\ref{prop:omega primality}(3). Then it follows by Theorem~\ref{thm:characterization of locally tame monoids} that $S_r$ is not locally tame, which is a contradiction. For the rest of the proof, we assume that $r \in \qq_{> 1} \setminus \nn$.
	
	We proceed to show that $\tau(1) = \infty$. For $k \in \nn$ such that $k \ge \mathsf{d}(r)$, consider the factorization $z_k = \mathsf{d}(r) r^k \in \mathsf{Z}(S_r)$. Since any strict sub-factorization $z'_k$ of $z_k$ is of the form $\beta r^k$ for some $\beta < \mathsf{d}(r)$, it follows by Lemma~\ref{lem:factorization of extremal length I} that $|\mathsf{Z}(z'_k)| = 1$. On the other hand, $1 \mid_{S_r} \mathsf{d}(r) r^k$ by Lemma~\ref{lem:element divisible by 1}. Therefore $z_k \in \mathsf{Z}_{\text{min}}(k, 1)$. Now consider the factorization
	\[
		z'_k := (\mathsf{n}(r) - 1) \cdot 1 + \sum_{i=1}^{k-1} (\mathsf{n}(r) - \mathsf{d}(r)) r^i.
	\]
	Proceeding as in the proof of Lemma~\ref{lem:element divisible by 1}, one can verify that $\phi(z'_k) = \mathsf{d}(r)r^k - 1$. In addition, the coefficients of the atoms $1, \dots, r^{k-1}$ in $z'_k$ are all strictly less than~$\mathsf{n}(r)$. Then it follows from Lemma~\ref{lem:factorization of extremal length I}(1) that $z'_k$ is a factorization of $\mathsf{d}(r)r^k - 1$ of minimum length. Because $|z'_k| = k(\mathsf{n}(r) - \mathsf{d}(r)) + \mathsf{d}(r) - 1$, one has that
	\begin{align*}
		\tau(1) 
			&= \sup_k \sup_z \big\{ \min \mathsf{L}\big(\phi(z) - 1 \big) \mid z \in \mathsf{Z}_{\text{min}}(k,1)\big\} \\
			&\ge \sup_k \min \mathsf{L}\big( \phi(z_k) - 1 \big) = \sup_k |z'_k| \\
			&= \lim_{k \to \infty} k(\mathsf{n}(r) - \mathsf{d}(r)) + \mathsf{d}(r) - 1 \\
			&= \infty.
	\end{align*}
Hence $\tau(1) = \infty$. Then it follows by Theorem~\ref{thm:characterization of locally tame monoids} that $S_r$ is not locally tame, which contradicts condition~(3). Thus, (3) implies (1), as desired.
\end{proof}
\medskip

\section{Summary}

We close in Table~\ref{Table 3} with a comparison between the various factorization invariants we have studied for a Puiseux monoid $S_r := \langle r^n \mid n \in \nn_0 \rangle$ generated by a geometric sequence and those for a numerical monoid generated by an arithmetic sequence, namely,
\[
	N := \langle n, n+d, \dots, n+kd \rangle,
\]
where $n$, $d$, and $k$ are positive integers with $k \le n-1$. Note that the corresponding results we obtain for the monoid $S_r$ were obtained for the monoid $N$ in the series of five papers \cite{ACHP07,ACKT11,BCKR,CGL09,CHM}, which appeared over a five-year period (2006--2011).

{\footnotesize
	
 \begin{table}[t] \caption{Monoidal Factorization Invariant Comparison}\label{Table 3}
		 	\begin{tabular}{ | p{7cm} | p{7cm}  |} \hline
		 	 \rowcolor{bleudefrance} \textbf{Numerical monoids of the form} $\mathbf{N=\langle n, n+d, \dots, n+kd\rangle}$ & \textbf{Puiseux monoids of the form} $\mathbf{S_r= \big\langle r^n \mid n \in \nn_0 \rangle}$ \\ \hline
		\rowcolor{blizzardblue}\multicolumn{2}{|c|}{System of sets of lengths}\\ \hline
		Sets of lengths in $N$ are arithmetic progressions \cite[Thm.~3.9]{BCKR} \cite[Thm.~2.2]{ACHP07}. By these results, $\Delta(N)=\{d\}$. &
		Sets of lengths in $S_r$ are arithmetic progressions (Theorem~\ref{thm:sets of lengths}). A a consequence, $\Delta(S_r)= \{| \mathsf{n}(r)-\mathsf{d}(r)| \}$.
		 \\ \hline
		\rowcolor{blizzardblue} \multicolumn{2}{|c|}{Elasticity}\\ \hline		
		$\rho(N) = \frac{n+dk}{n}$ is accepted~\cite[Thm.~2.1]{CHM} and fully elastic only when $N = \nn_0$~\cite[Thm.~2.2]{CHM}.
		& If $S_r$ is atomic, then $\rho(S_r) \in \{1,\infty\}$ (Corollary \ref{cor:elasticity of rational semirings}). Moreover, $\rho(M)$ is accepted if and only if $r < 1$ or $r \in \nn$ (Proposition \ref{accepted}). $S_r$ is fully elastic when $\mathsf{n}(r) = \mathsf{d}(r) + 1$ (Proposition~\ref{prop:set of elasticities}).
		\\ \hline
		\rowcolor{blizzardblue} \multicolumn{2}{|c|}{Catenary degree}\\ \hline
		$\mathsf{c}(N) = \left\lceil \frac{n}{k}\right\rceil +d$ \cite[Thm.~14]{CGL09} &
	 	If $S_r$ is atomic, then $\mathsf{c}(S_r) = \max\{\mathsf{n}(r), \mathsf{d}(r)\}$ (Corollary \ref{bigcor}) \\ \hline
		\rowcolor{blizzardblue} \multicolumn{2}{|c|}{Tame degree}\\ \hline
		$N$ is always globally tame (and, consequently, locally tame) \cite[Thm. 3.1.4]{GH06}. &
		$S_r$ is globally tame if and only if $S_r$ is locally tame if and only if $r \in \nn$. (Theorem \ref{thm:cyclic semirings are no locally tame}). \\ \hline
		\rowcolor{blizzardblue} \multicolumn{2}{|c|}{Omega primality}\\ \hline
		$\omega(N)=\infty$ \cite[Prop.~2.1]{ACKT11}.
		&
		If $S_r$ is atomic and $r<1$, then $\omega(S_r) = \infty$ (Theorem~\ref{thm:cyclic semirings are no locally tame}).\\ \hline
  	\end{tabular} \end{table}
}
\bigskip

\section*{Acknowledgements}
While working on this paper, the second author was supported by the UC Year Dissertation Fellowship. The authors are grateful to an anonymous referee for helpful suggestions.
\bigskip

\end{document}